\definecolor{s1}{rgb}{0.91, 0.36, 0.47}
\definecolor{s2}{rgb}{0.35, 0.227, 0.33}
\definecolor{sf1}{rgb}{0.19, 0.14, 0.698}
\definecolor{sf2}{rgb}{0.61, 0.11, 0.08}
\newtheorem{theorem}{Theorem}[section]
\newtheorem{lemma}[theorem]{Lemma}
\newtheorem{remark}[theorem]{Remark}
\newtheorem{definition}{Definition}[section]
\newtheorem{corollary}[theorem]{Corollary}
\newcommand{\margnote}[1]{\ifthenelse{\boolean{shownotes}}
	{\marginpar{\raggedright\tiny\texttt{#1}}}{}}
\newcommand{\CC}{\mathbb{C}}
\newcommand{\RR}{\mathbb{R}}
\newcommand{\bu}{\bm{u}}
\newcommand{\bn}{\bm{n}}
\newcommand{\bg}{\bm{g}}
\newcommand{\bx}{\bm{x}}
\newcommand{\pa}{\partial}
\definecolor{DarkGreen}{rgb}{0.04,0.7,0.07}
\definecolor{DarkOrange}{RGB}{255,140,0}
\def\namedlabel#1#2{\begingroup
	#2%
	\def\@currentlabel{#2}%
	\phantomsection\label{#1}\endgroup
}
\colorlet{linkequation}{blue}
\begin{document}

\title[Dislocations in an layered elastic medium]{Dislocations in a layered
elastic medium with applications to fault detection}

\author[A. Aspri {\em et al.}]{Andrea Aspri}
\address{Department of Mathematics, Universit\`a degli Studi di Pavia}
\email{andrea.aspri@unipv.it}

\author[]{Elena Beretta}
\address{Department of Mathematics, NYU-Abu Dhabi}
\email{elena.beretta@polimi.it}

\author[]{Anna L. Mazzucato}
\address{Department of Mathematics, Penn State University}
\email{alm24@psu.edu}

\date{\today}

\keywords{dislocations, elasticity, Lam\'e system, well-posedness, inverse
problem, uniqueness}

\subjclass[2010]{Primary 35R30; Secondary 35J57, 74B05, 86A60}

\begin{abstract}
We consider a model for elastic dislocations in geophysics. We model a
portion of the Earth's crust as a bounded, inhomogeneous elastic
body with a buried fault surface, along which slip occurs. We prove
well-posedness of the resulting mixed-boundary-value-transmission problem, assuming only bounded elastic moduli. We establish uniqueness in the inverse problem of determining the fault surface and the slip
from a unique measurement of the displacement on an open patch at the surface,
assuming in addition that the Earth's crust is an isotropic, layered medium
with Lam\'e coefficients piecewise Lipschitz on a known
partition and that the fault surface satisfies certain geometric conditions.
These results substantially extend those of the authors in
{\em Arch. Ration. Mech. Anal.} {\bf 263} (2020), n. 1, 71--111.
\end{abstract}

\maketitle

\section{Introduction}
The focus of this work is an analysis of both the forward or direct problem, as
well as the inverse problem, for a model of buried faults in the Earth's crust.
Specifically, we prove well-posedness of the direct problem, assuming only
$L^\infty$ elastic coefficients, and uniqueness in the inverse problem, under
additional assumptions, which are motivated by the ill-posedness of the
inverse problem and are not overly restrictive for the applications we are
concerned about.

We model the Earth's crust as a layered, inhomogeneous
elastic medium, and the fault as an oriented, open  surface $S$ immersed in this
elastic medium and not reaching the surface (the case of {\em buried} or {\em blind faults}),
along which there can be slippage of the rock. Faults can have any
orientation with respect to the surface: horizontal, vertical, or oblique. When
slip occurs, we speak of elastic {\em dislocations}.
Mathematically, the slip is given by a non-trivial jump in the elastic
displacement across the fault, represented by a non-zero vector field $\bm{g}$
on $S$. The surface of the Earth can be assumed traction free, that is, no load
is bearing on it, while on the fault itself one can assume that the jump in the
traction is zero, that is, the loads on the two sides of the fault balances
out. (We refer the reader to \cite{Ciarlet,Gurtin} for instance for a
mathematical treatment of elasticity.)

The {\em direct} or {\em forward} problem consists in finding the elastic
displacement in the Earth's crust induced by the slip on the fault. The {\em
inverse} problem consists in determining the fault surface $S$ and the slip
$\bm{g}$ from measurements of surface displacement. The inverse problem has
important applications in seismology and geophysics.
The surface displacement can be inferred from Synthetic Aperture Radar (SAR)
and from Global Positioning System
(GPS) arrays monitoring (see e.g. \cite{SAR1,GPS2,GPS1,GPS3}).

In the so-called {\em interseismic} period, that is, the, usually long, period
between earthquakes, one can make a quasi-static approximation and work within
the framework of elastostatics. In seismology, the assumption of small
deformations is generally a good approximation away from active faults, and
therefore linear elastostatics is typically employed. Near active faults, and
especially during earthquakes, the so-called {\em co-seismic} period, more
accurate models assume the rock is viscoelastic. However, a rigorous analysis
of these more complex, non-linear models  is
still essentially missing. We plan to address non-linear and non-local models
in future work.

The study of elastic dislocations is classical in the context of isotropic,
homogeneous, linear elasticity, when
the surface $S$ is assumed to be of a particular simple form, that is, a
rectangular fault that has a not-too-big inclination angle with respect to the
unperturbed, flat Earth's surface.  (We refer to
\cite{Eshelby73, Segall10} and references therein for a more in-depth
discussion.)  In
this case, modeling the Earth's crust as an infinite half-space, there exists an
explicit formula for the displacement field induced by the slip on the fault,
due to Okada \cite{Okada92} (see also \cite{Nikkhoo17}).
To our knowledge, there are few works that tackle the forward problem in case of
non-homogeneous regular coefficients and more realistic geometries for the fault.
Indeed, the
problem is intrinsically singular along the fault, where non-standard
transmission conditions are imposed.
{A variational formulation of the problem for a bounded domain was introduced in  \cite{Zwieten_et_al14}.}

In \cite{Aspri-Beretta-Mazzucato-de-Hoop}, we proved well-posedness of the
direct problem for elastic dislocations, assuming the Earth's crust is an
infinite half-space, the elastic coefficients are Lipschitz continuous, and the
surface $S$ is also of Lipschitz class. We also established uniqueness in the
inverse problem from one measurement of surface displacement on an open patch,
under some additional assumptions on the geometry of the fault and the slip,
namely we took $S$ to be a graph with respect to an arbitrary, but given,
coordinate system, we assumed that $S$ has at least one corner singularity, and
assumed $\bm{g}$ tangential to $S$. The main difficulties in that work were
twofold. On one hand, we had to work with suitably weighted Sobolev spaces in
order to control the slow decay of solutions at infinity. On the other, we
allowed slips that do not vanish anywhere on $S$. Then the solution at the
boundary of the fault may develop singularities, for instance in the case of
constant slip and a rectangular fault, for which logarithmic blow-up at the
vertices exists, as noted already by Okada \cite{Okada92}. These potential singularities are
unphysical and  do not allow for a variational approach to well-posedness.
Instead, owing to the regularity of the coefficients, we used a duality
argument for an equivalent source problem. We also established a
double-layer-potential representation for the solution. Uniqueness for the
inverse problem was obtained using unique continuation, again owing to the
regularity of the coefficients.

The main focus of this work is to generalize the results in
\cite{Aspri-Beretta-Mazzucato-de-Hoop} to a more realistic set-up. We
model a portion of the Earth's crust, where the fault is located, as a Lipschitz
bounded domain $\Omega$, which includes the case of polyhedral domains,
relevant to numerical implementations and applications. The direct problem consists in solving a
mixed-boundary-value-transmission problem for the elasticity system in $\Omega$,
given in Equation \eqref{eq: Pu}. On the buried part of
the boundary of $\Omega$, which we call $\Sigma$, we impose homogeneous
Dirichlet boundary conditions, that is, zero displacement.
\begin{figure}
	\centering
	\includegraphics[scale=0.7]{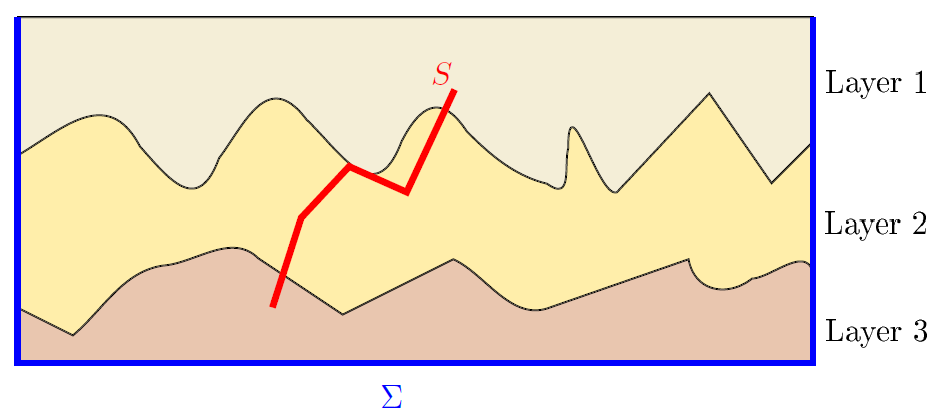}
	\caption{An example of the geometrical setting. A section of a layered medium with $S$, the dislocation surface, and with $\Sigma$, the buried part of $\Omega$.}\label{fig:geom_set}
\end{figure}
Such boundary condition model the situation, where the relative motion of rock formations is
small away from the fault as compared to that near the fault itself, except at
the surface of the Earth due to the traction-free assumption there. This assumption implicitly includes that $\Omega$ is large enough compared to the size of the active portion of the fault where the slippage occurred, so our model is not well suited for large, active faults.  A non-zero displacement on $\Sigma$ can also be imposed and other types of boundary conditions on $\Sigma$  can be treated, such as inhomogeneous Neumann boundary
conditions, modeling the load bearing on the rock formations at the boundary
from nearby formations. We assume the Earth's crust to be a layered elastic
medium, a common assumption in geophysics, that is, we assume that the elastic
coefficients are piecewise regular, but may jump across a known partition of
$\Omega$, see Figure \ref{fig:geom_set}, and impose standard transmission conditions at the interfaces of the
partition. This set-up has been considered in the literature to model
dislocations in geophysics (see for example
\cite{RMB02,Sato71,Zwieten_et_al13}). Furthermore, posing the problem in a bounded domain lends itself naturally to a numerical implementation of the problem that does not utilize boundary integral equations, rather it uses a variational formulation for the problem \cite{Antonietti-Bonaldi-Mazzieri,Quarteroni-et-al}.

In this work, we assume that the slip
$\bm{g}$ vanishes at the boundary of the fault. We are therefore modeling the case of an {\em unlocked fault patch} on only a part of the fault. By unlocked fault patch we mean a part of the fault surface where the rock of the two sides of the fracture have slipped freely relative one another. It is observed that most faults have a distribution of locked and unlocked patches. The quasi-static approximation can be used for so-called {\em aseismic creeping faults}, that is, faults where the rock slowly slips without major seismic events. There are known creeping faults in major populated areas of Japan and California, for instance. Creeping faults are relevant also for {\em microseismicity}, which indicates frequent seismic activity of small amplitude. (Among the vast literature on the subject, we refer the reader to  \cite{Faults1,Faults2,Faults3,Faults4}.)
The support of $\bm{g}$ can still
be the entire fault surface, a situation that arises in the inverse
problem. Then a variational solution exists for Problem \eqref{eq: Pu},
constructed by solving suitable auxiliary Neumann and mixed-boundary-value
problems, after \cite{Agranovich}. For the direct problem, well-posedness holds
if the elasticity tensor $\mathbb{C}$ is an (anisotropic) bounded, strongly
convex tensor.

For the inverse problem, we require more. On one hand, we need to guarantee
that unique continuation for the elasticity system holds. This can be achieved
by assuming that $\Omega$ is partitioned into finitely many Lipschitz
subdomains and assuming that $\mathbb{C}$ is isotropic with Lam\'e
coefficients Lipschitz continuous in each subdomain (see \cite{Alessandrini_et_al14,Beretta_et_al17,Beretta_et_al,Beretta_et_al14} where a similar approach has been used to determine internal properties of an elastic medium from boundary measurements). On the other,
the uniqueness proof, which uses an argument by contradiction, can be
guaranteed to hold when $S$ is a graph with respect to an arbitrary, but
chosen, coordinate system. This assumption is again not too restrictive in the
geophysical context and allows for an arbitrary orientation of the surface, horizontal, vertical, or oblique {(see Remark \ref{r:faultgeometry})}. Differently than in
\cite{Aspri-Beretta-Mazzucato-de-Hoop}, however, due to the fact that the slip
vanishes on the boundary of $S$, one does not need to assume $S$ has a corner singularity or assume a specific direction for the slip field $\bm{g}$. Therefore,
the results presented here are a substantial generalization over known results
for both the well-posedness of the direct and the uniqueness of the inverse
problem.

The inverse dislocation problem has been treated both within the mathematics
community \cite{Volkov-Voisin-Ionescu}, as well as in the
geophysics community (among the extensive literature we mention
\cite{Arnadottir-Segall, ONSKTI11, PTCSP19} and references therein).
Reconstruction has been tested primarily through iterative algorithms \cite{Volkov-Voisin-Ionescu}, based on
Newton's
methods or constrained optimization of a suitable misfit functional, using
either Boundary Integral methods or Finite Element methods, as well as
Green's function methods to solve the direct problem. For stochastic and
statistical approaches to inversion we mention \cite{MSB13, Volkov-Calafell}
and references therein.
We do not address here the question of reconstruction and its stability (see \cite{Beretta_et_al08,Triki-Volkov}). This
is focus of future work, which we plan to tackle by using appropriate iterative
algorithms and solving the direct problem via Discontinuous Galerkin methods
(for example adapting the methods in
\cite{Antonietti-Bonaldi-Mazzieri,Quarteroni-et-al}).

We close this Introduction with a brief outline of the paper. In Section
\ref{sec: notation and functional setting}, we introduce the relevant notation
and the function spaces used throughout. In Section \ref{sec:direct},
we discuss the main assumptions on the coefficients and the geometry, and we
address the well-posedness of the direct problem, while we discuss
additional assumptions and prove uniqueness for the inverse problem in Section
\ref{sec:inverse}.

\section*{Acknowledgments}
The authors thank E. Rosset and S. Salsa for suggesting relevant literature and
for useful discussions that lead us to improve some of the results in this
work. They also thank the anonimous referees for their insighful comments.
A. Aspri acknowledges the hospitality of the
Department of Mathematics at NYU-Abu Dhabi. A. Mazzucato was Visiting Professor at NYU-Abu Dhabi on leave from Penn State University, when part of this work was conducted. She is partially
supported by the US National Science Foundation Grant DMS-1615457 and
DMS-1909103.

\section{Notation and Functional Setting}\label{sec: notation and functional setting}
We begin by introducing needed notation and the functional setting for both the
direct as well as the inverse problem.

\textit{Notation:} We denote scalar quantities in italics, e.g. $\lambda, \mu, \nu$,
points and vectors in bold italics, e.g.  $\bm{x}, \bm{y}, \bm{z}$ and $\bm{u}, \bm{v}, \bm{w}$, matrices and
second-order tensors in bold face, e.g.  $\mathbf{A}, \mathbf{B}, \mathbf{C}$,
and fourth-order tensors in blackboard face, e.g.  $\mathbb{A}, \mathbb{B},
\mathbb{C}$.

The symmetric part of a second-order tensor $\mathbf{A}$ is denoted by
$\widehat{\mathbf{A}}=\tfrac{1}{2}\left(\mathbf{A}+\mathbf{A}^T\right)$, where
$\mathbf{A}^T$ is the transpose matrix. In particular, $\widehat{\nabla \bu}$
represents the deformation tensor.
We utilize  standard notation for inner products, that is,
$\bm{u}\cdot \bm{v}=\sum_{i} u_{i} v_{i}$, and  $\mathbf{A}:
\mathbf{B}=\sum_{i,j}a_{ij} b_{ij}$.
$|\mathbf{A}|$ denotes the norm induced by the inner product on matrices:
	\begin{equation*}
		|\mathbf{A}|=\sqrt{\mathbf{A}:\mathbf{A}}.
	\end{equation*}

\textit{Domains:} Given $r>0$, we denote  the ball of radius $r$ and
center
$\bm{x}$ by $	B_{r}(\bm{x})\subset\mathbb{R}^3$ and a circle of radius $r$
and center $\bm{y}$ by $B'_{r}(\bm{y})\subset\mathbb{R}^2$.
\begin{definition}[$C^{k,\alpha}$ regularity of domains]\ \\
		Let $\Omega$ be a bounded domain in $\mathbb{R}^3$. Given $k, \alpha$, with $k\in\mathbb{N}$ and $0<\alpha\leq 1$, we say that a portion $\Upsilon$ of $\partial \Omega$ is of class $C^{k,\alpha}$ with constant $r_0$, $E_0$, if for any $\bm{P}\in \Upsilon$, there exists a rigid transformation of coordinates under which we have that $\bm{P}$ is mapped to the origin  and
		\begin{equation*}
		\Omega\cap B_{r_0}(\bm{0})=\{\bm{x}\in B_{r_0}(\bm{0})\, :\, x_3>\psi(\bm{x}')\},
		\end{equation*}
		where $\bx=(x_1,x_2,x_3)$, $\bx'=(x_1,x_2,0)$ identified canonically with $(x_1,x_2)\in\RR^2$.
		Above, ${\psi}$ is a $C^{k,\alpha}$ function on $B'_{r_0}(\bm{0})\subset \mathbb{R}^2$, such that
		\begin{equation*}
		\begin{aligned}
		{\psi}(\bm{0})&=0,\\
		\nabla{\psi}(\bm{0})&=\mathbf{0}, \qquad \text{for}\, k\geq 1\\
		\|{\psi}\|_{C^{k,\alpha}(B'_{r_0}(\bm{0}))}&\leq E_0.
		\end{aligned}
		\end{equation*}
		When $k=0, \alpha=1$, we also say that $\Upsilon$ is of Lipschitz class with constants $r_0$, $E_0$.
	\end{definition}
Similarly, we define a surface $S$ of class $C^{k,\alpha}$ if it is locally the graph of a function $\psi$ with the properties described above. For $k=0$, $\alpha=1$, the case we are interested in, since the composition of Lipschitz maps is Lipschitz, we can define a Lipschitz curve on a Lipschitz surface $S$ (such as its boundary $\pa S$) by lifting a Lipschitz curve on $\RR^2$.

Given a bounded domain $\Omega\subset \mathbb{R}^3$ such that
$\overline{\Omega}:=\overline{\Omega^+}\cup\overline{\Omega^-}$, where
$\Omega^+$ and $\Omega^-$ are bounded domains, we call $f^+$ and $f^-$ the
restriction of a function or distribution $f$ to ${\Omega^+}$ and $\Omega^-$,
respectively.
We denote the jump of a function or tensor field
$\bm{f}$ across a bounded, oriented surface $S$ by $[\bm{f}]_S:=\bm{f}^+_S-\bm{f}^-_S$, where $\pm$ denotes a non-tangential limit to each side of the oriented surface
$S$, $S^+$ and $S^-$, where $S^+$ is by convention the side where the unit
normal vector  $\bn$ points into and $\bn$ is determined by the given
orientation on $S$.

\textit{Functional setting:} we use standard notation to denote the usual
functions spaces, e.g. $H^s(\Omega)$ denotes the $L^2$-based Sobolev space with
regularity index $s\in\mathbb{R}$. $C^\infty_0(\Omega)$ is the space of smooth functions
with compact support in $\Omega$.

We will need to consider trace spaces on open bounded
surfaces that have a good extension property to closed surfaces containing
them. (We refer to \cite{Lions-Magenes,Tartar} for an in-depth discussion).
In what follows, $D$ is a given open bounded Lipschitz domain in
$\mathbb{R}^n$, $n=2$ or $n=3$.

We recall that fractional Sobolev spaces on $D$ can be defined via real
interpolation, and that
$H^s_{0}(D):=\overline{C^{\infty}_0(D)}^{\|\cdot\|_{H^s(D)}}$,
$s\geq 0$. We also recall that $H^{s}(D)=H^{s}_0(D)$, $0\leq s\leq 1/2$. If $s<1/2$, it is possible to extend an
element of $H^s(D)$ by zero in $\RR^n\setminus D$ to an element of
$H^s(\mathbb{R}^n)$. When $s=1/2$ such an extension is possible for elements
that are suitably weighted by the distance to the boundary, since the extension operator from $H^{\frac{1}{2}}_0(D)$ to $H^{\frac{1}{2}}_0(\mathbb{R}^n)$ is not continuous. Following Lions and
Magenes \cite{Lions-Magenes},
we introduce the space $H^{\frac{1}{2}}_{00}(D)$ defined as
\begin{equation} \label{eq:H1200def}
  H^{\frac{1}{2}}_{00}(D):=\Big\{u\in H^{\frac{1}{2}}_{0}(D),
    \delta^{-1/2} u\in L^2(D)\Big\},
\end{equation}
where $\delta(x) =$dist$(x,\pa D)$, for $x\in D$.
This space is equipped with its natural norm, i.e.:
\[
    \|f\|_{H^{\frac{1}{2}}_{00}(D)} := \|f\|_{H^{\frac{1}{2}}(D)} +
           \|\delta^{-1/2}\,f\|_{L^2(D)},
\]
which gives a finer topology than that in $H^{\frac{1}{2}}(D)$. $\delta$ can be replaced by a function $\varrho\in C^\infty(D)$ that is comparable to the distance to the boundary, in the sense that
\begin{equation}
   \lim\limits_{x\to x_0} \frac{\varrho(x)}{d(x,\partial D)}=d\neq 0,\qquad
  \forall x_0\in \partial D,
\end{equation}
$\varrho>0$ in $D$ and $\varrho$ vanishes on $\pa D$ (see e.g. \cite[Lemma 3.6.1]{Ziemer}). We opted for the definition above of $H^{1/2}_{00}(D)$ as we do not need to consider higher-order traces.
If $v\in
H^{\frac{1}{2}}_{00}(D)$, then its extension by zero to $\RR^n\setminus D$
is an element of $H^{\frac{1}{2}}(\RR^n)$ and the extension operator is bounded. In
particular, $v=0$ on $\partial D$ in trace sense. The space $H^{\frac{1}{2}}_{00}$ can
also be identified with the real interpolation space (see e.g. \cite[Chapter 7]{Adams})
\begin{equation*}
   H^{\frac{1}{2}}_{00}(D)=(H^1_{0}(D), L^2(D))_{\frac{1}{2},2}.
\end{equation*}
We will need to define $ H^{\frac{1}{2}}_{00}(S)$ where $S$ is a Lipschitz surface. We can define
$ H^{\frac{1}{2}}_{00}(S)$ in a standard way using partitions of unity and coordinates charts  (see e.g. \cite{Adams,TriebelIII}). Then, we define $H^{\frac{1}{2}}_{00}(S)$ as in \eqref{eq:H1200def}, where $\delta$ is the distance induced by surface area on $S$. 
In addition, Lipschitz domains are extension domains for Sobolev spaces \cite{Ziemer}, so that using a local coordinate chart, we can prove the extension property from $H^{\frac{1}{2}}_{00}(S)$ to  $H^{\frac{1}{2}}(\Gamma)$, where $\Gamma$ is any closed surface containing $S$.

 Let $\mathcal{R}$ be the space of infinitesimal rigid motions in $\RR^3$. To
study the well-posedness of the direct problem, we introduce two
variational spaces
	\begin{equation}\label{def: sob_space_norm}
		\mathring{H}^1(D)=\Bigg\{\bm{\eta}\in H^1(D):\,\,\,
\int_{D}\bm{\eta}\cdot \bm{r}\, d\bm{x}=0, \forall \bm{r}\in\mathcal{R}\Bigg\},
	\end{equation}
and
	\begin{equation}\label{def: sob_space_trace}
	{H}^1_{\Sigma}(D)=\Bigg\{\bm{\eta}\in H^1(D):\,\,\,
\bm{\eta}|_{\Sigma}=\bm{0}\Bigg\},
	\end{equation}
where $\Sigma$ denotes the closure of an open subset of $\partial D$.

Finally, we denote the duality pairing between a Banach space $X$ and its dual
$X'$ with $\langle \cdot, \cdot \rangle_{(X',X)}$. When clear from the context,
we will omit the explicit dependence on the spaces, writing $\langle \cdot,
\cdot \rangle$. We will write $\langle \cdot, \cdot \rangle_D$ to denote the
pairing  restricted to a domain $D$.

\section{The direct problem} \label{sec:direct}
We first discuss the main assumptions on the dislocation surface $S$
and the elastic tensor $\mathbb{C}$, used in the rest of the paper. Then, we study the well-posedness of the forward problem.\\
Below $\Omega$ is a bounded Lipschitz domain.

\begin{description}
	\item[\namedlabel{hyp: assum_elastic_tens}{Assumption 1} - elasticity
	tensor]  The elasticity tensor $\mathbb{C}=\mathbb{C}(\bm{x})$ is a fourth-order tensor satisfying the full symmetry properties
	\begin{equation*}
	\mathbb{C}_{ijkh}(x)=\mathbb{C}_{jikh}(x)=\mathbb{C}_{khij}(x),\qquad \forall 1\leq i,j,k,h\leq 3,\ \textrm{and}\ \ \bm{x}\in\Omega,
	\end{equation*}
	which it is	assumed uniformly bounded, $\mathbb{C}\in L^{\infty}(\Omega)$, and
	uniformly strongly convex, that is,
	$\CC$ defines a positive-definite quadratic form on symmetric matrices:
	\begin{equation*}
	\mathbb{C}(\bm{x})\widehat{\mathbf{A}}:\widehat{\mathbf{A}}\geq c
	|\widehat{\mathbf{A}}|^{2}, \qquad \textrm{a.e in}\,\, \Omega,
	\end{equation*}
	for $c>0$.
\end{description}

\begin{description}
	\item[\namedlabel{hyp: assum_disl}{Assumption 2} - dislocation surface] We model the dislocation surface  $S$ by an open, bounded, oriented Lipschitz surface, with Lipschitz boundary, such that
	\begin{equation}
	\overline{S}\subset \Omega.
	\end{equation}
	We assume that  $S$ can be extended to a closed Lipschitz,
	orientable surface $\Gamma$ satisfying
	\begin{equation*}
	\Gamma \cap \partial \Omega=\emptyset.
	\end{equation*}
	Moreover, we indicate  the domain enclosed by $\Gamma$ with $\Omega^-$ and
	$\Omega^+=\Omega\setminus\overline{\Omega^-}$. We choose the orientation on $S$
	so that the associated normal $\bn$ coincides with the unit outer normal to
	$\Omega^{-}$.
\end{description}

In this section, we study the following mixed-boundary-value problem:
	\begin{equation}\label{eq: Pu}
		\begin{cases}
			\textrm{\textup{div}}\, (\mathbb{C}\widehat{\nabla}\bm{u})=\bm{0},& \textrm{in}\,\,
			\Omega \setminus \overline{S},\\
			(\mathbb{C}\widehat{\nabla}\bm{u})\bm{\nu}=\bm{0}, & \textrm{on}\,\, \partial \Omega \setminus \Sigma, \\
			\bm{u}=\bm{0}, & \textrm{on}\,\, \Sigma\\
			[\bm{u}]_{S}=\bm{g}, \\
			[(\mathbb{C}\widehat{\nabla}\bm{u})\bm{n}]_{S}=\bm{0} ,\\
		\end{cases}
	\end{equation}
where $\Sigma$ is the closure of an open subset in $\partial\Omega$, $\bm{n}$
is the normal vector induced by the orientation on $S$ (see \ref{hyp: assum_disl}), $\bm{\nu}$ is the unit
outer normal vector on  $\partial \Omega$.

$\Omega$ represents a portion of the Earth's crust where the fault $S$
lies and where both the direct and inverse problems are studied.
We assume that $S$ does not reach the boundary of $\Omega$, which corresponds geophysically to the case of buried or blind  faults. From the point of view of the inverse problem, this is the most interesting case, as there is no direct access to the fault from the boundary for monitoring.
$\Sigma$ models the buried part of the boundary of $\Omega$. Assuming that the rock
displacement is zero on $\Sigma$ is justified from a geophysical point of view, as
the relative motion of rock formations can be assumed much slower than rock
slippage along faults. In applications, one needs to assume there that $\Omega$ is large enough compared to the size of the fault for this justification to hold.
The complement of $\Sigma$ models the part of the
boundary on the Earth's crust and hence can be taken traction free.
(See e.g. \cite{Zwieten_et_al13}.)

The vector field $\bm{g}$ on $S$ models the slip along the active patch of the
fault. We assume that
\begin{equation}
\bm{g}\in H^{\frac{1}{2}}_{00}(S).
\end{equation}
Recall that elements in this space have zero trace at the boundary.

\begin{remark}\label{rem: extent_g}
By hypothesis (see \ref{hyp: assum_disl}), $S$ is part of a closed Lipschitz
surface $\Gamma$. Then, $\bm{g} \in H^{\frac{1}{2}}_{00}(S)$ implies that $\bm{g}$
can be extended by zero in $\Gamma\setminus S$ to a function $\widetilde{\bm{g}}
\in H^{\frac{1}{2}}(\Gamma)$:
		\begin{equation}\label{eq: g_extension}
			\widetilde{\bm{g}}(\bm{x})=
				\begin{cases}
				 \bm{g}(\bm{x}),& \textrm{if}\,\,\, \bm{x}\in S,\\
				 \bm{0},& \textrm{if}\,\,\, \bm{x}\in
\Gamma\setminus S.
				\end{cases}
		\end{equation}
\end{remark}

\begin{remark}
 As discussed in the Introduction, there are geophysical motivations for considering a slip $\bg$ that vanishes at the boundary of the surface $S$ (a creeping unlocked fault patch). There are mathematical reasons for considering such a class of slips, as well. Given the minimal regularity of the coefficients, a variational formulation of the problem is the most natural one.
However, it can be shown (see \cite{Aspri-Beretta-Mazzucato-de-Hoop} for a discussion on this point) that, if $\bg$ is an arbitrary field in $H^{\frac{1}{2}}(S)$, then the solution $\bu$ is not necessarily in $H^1(\Omega\setminus \overline{S})$. The space $H^{\frac{1}{2}}_{00}(S)$ is the optimal choice for the slip then, because it consists precisely of those elements in $H^{1/2}(S)$ that can be extended by zero to $H^{\frac{1}{2}}(\Gamma)$, where $\Gamma$ is any arbitrary Lipschitz closed surface containing $S$, with norm bounds on the extension and the restriction back to $\overline{S}$ (\cite{Tartar}). Moreover, we are also interested in implementing a reconstruction algorithm. If an iterative algorithm is used, then we need to numerically solve the forward or direct problem several times. For inhomogeneous media with discontinuous coefficients, as in this work, a variational approach, such as that in FEM and DG methods, is practical.  The advantage of working with $\Gamma$ instead of $S$ is that Green's formulas apply to $\Omega\setminus \Gamma$. By working with  $H^{\frac{1}{2}}_{00}(S)$, we are then able to prove the equivalence of the problem formulation using $\Gamma$ and that using $S$ at the level of weak solutions.
In the extensive literature concerning interface and boundary problems, other spaces have been considered, notably, the space $\{\bg \in H^{\frac{1}{2}}(\Gamma)\; \mid\; \text{supp}\, \bg \subseteq \overline{S}\}$ \cite{Agranovich} (we need to be able to take supp\,$\bg=\overline{S}$ for the inverse problem). It would be interesting and relevant to investigate further the relationship between this space and
$H^{\frac{1}{2}}_{00}(S)$. However, this analysis is not the focus of our work.
\end{remark}

By a {\it weak} solution of \eqref{eq: Pu} we mean that
 \begin{equation}
 \begin{cases}
 \textrm{\textup{div}}\, (\mathbb{C}\widehat{\nabla}\bm{u})=\bm{0},& \textrm{in}\,\,
 \left( H^1_{\Sigma}(\Omega)\right)'\\
 (\mathbb{C}\widehat{\nabla}\bm{u})\bm{\nu}=\bm{0}, & \textrm{in}\,\, H^{-\frac{1}{2}}(\partial \Omega \setminus \Sigma), \\
 \bm{u}=\bm{0}, & \textrm{in}\,\, H^{\frac{1}{2}}(\Sigma)\\
 [\bm{u}]_{S}=\bm{g},&  \textrm{in}\,\, H^{\frac{1}{2}}_{00}(S) \\
 [(\mathbb{C}\widehat{\nabla}\bm{u})\bm{n}]_{S}=\bm{0},& \textrm{in}\,\, H^{-\frac{1}{2}}(S) \\
 \end{cases}
 \end{equation}

The strategy that we follow here is an adaptation of the procedure described
in \cite{Agranovich_book} to solve classical transmission problems.
Given the closed surface $\Gamma$ and the extension \eqref{eq:
g_extension}, we decompose $\Omega$ into two domains $\Omega^-$ and $\Omega^+$,
as in \ref{hyp: assum_disl}. Then we construct a
weak solution of Problem \eqref{eq: Pu} by solving two boundary-value problems,
one in $\Omega^-$ and one $\Omega^+$, imposing suitable Neumann
conditions on $\Gamma$.
The key step in this procedure
consists in identifying the proper Neumann boundary condition on
$\Gamma$ such that
$[\bm{u}]_{\Gamma}=\bm{u}^+_{\Gamma}-\bm{u}^-_{\Gamma}=\widetilde{\bm{g}}$,
where $\bm{u}^+_{\Gamma}$ and $\bm{u}^-_{\Gamma}$ are the traces on
$\Gamma$ of the solutions $\bm{u}^+$ in $\Omega^+$
and $\bm{u}^-$ in $\Omega^-$, respectively.
In $\Omega^{-}$, the solution $\bm{u}^-$ will be sought in the
auxiliary space $\mathring{H}^1(\Omega^{-})$ to ensure uniqueness. This choice
imposes apparently artificial normalization conditions in $\Omega^{-}$, which
are not needed to solve the original problem \eqref{eq: Pu}. However, we
can verify {\em a posteriori} that such conditions are in fact satisfied by the
unique solution to the original problem.

We shall first  prove some preliminary results.

\begin{lemma}\label{lem: Htilde}
	Let $\overline{\Omega}=\overline{\Omega^+}\cup\overline{\Omega^-}$,
where $\Omega^+$ and $\Omega^-$ are defined in \ref{hyp: assum_disl}. Let
		\begin{equation}
			\widetilde{H}:=\Big\{f\in L^2(\Omega):\,\, f^+\in
H^1(\Omega^+),\, f^-\in H^1(\Omega^-),
\textup{\textrm{and}}\,\, [f]_{\Gamma\setminus\overline{S}}=0   \Big\},
		\end{equation}
where $f^+=f\lfloor_{\Omega^+}$ and $f^-=f\lfloor_{\Omega^-}$.
Then
		\begin{equation*}
			H^1(\Omega\setminus\overline{S})\cong \widetilde{H}.
		\end{equation*}
\end{lemma}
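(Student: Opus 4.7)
The plan is to exhibit the identification as the restriction map
$u \mapsto (u|_{\Omega^+}, u|_{\Omega^-})$, viewed as a single element of
$L^2(\Omega)$, and show it is an isometric isomorphism between
$H^1(\Omega\setminus\overline{S})$ and $\widetilde{H}$, with inverse
$f \mapsto f^+\chi_{\Omega^+} + f^-\chi_{\Omega^-}$ (which agrees with $f$ almost
everywhere since $\Gamma$ has zero Lebesgue measure). The key observation driving
everything is that $\overline{S}$ is closed in $\mathbb{R}^3$, so
$\Omega\setminus\overline{S}$ is an open set that \emph{contains}
$\Gamma\setminus\overline{S}$; in particular every point of
$\Gamma\setminus\overline{S}$ admits a ball neighborhood contained in
$\Omega\setminus\overline{S}$.

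For the forward inclusion $H^1(\Omega\setminus\overline{S})\hookrightarrow
\widetilde{H}$, given $u\in H^1(\Omega\setminus\overline{S})$, the restrictions
$u^\pm$ lie in $H^1(\Omega^\pm)$ by trivial restriction. To check
$[u]_{\Gamma\setminus\overline{S}}=0$, I would fix any
$p\in\Gamma\setminus\overline{S}$, choose $r>0$ small enough that
$B_r(p)\subset\Omega\setminus\overline{S}$, and observe that $u\in H^1(B_r(p))$.
Because $\Gamma\cap B_r(p)$ is a Lipschitz interface splitting $B_r(p)$ into its
$\Omega^+$ and $\Omega^-$ halves, the two internal traces of $u$ on this
interface coincide as elements of $H^{1/2}(\Gamma\cap B_r(p))$. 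A partition of
unity over a countable cover of $\Gamma\setminus\overline{S}$ by such balls
promotes this to vanishing of $[u]_{\Gamma\setminus\overline{S}}$ in the
appropriate (local) $H^{1/2}$ sense.

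For the reverse inclusion, given $f\in\widetilde{H}$, I would introduce the
candidate weak gradient $g:=\nabla f^+\chi_{\Omega^+}+\nabla
f^-\chi_{\Omega^-}\in L^2(\Omega\setminus\overline{S};\mathbb{R}^3)$ and verify
$\int f\,\partial_i\varphi = -\int g_i\,\varphi$ for every
$\varphi\in C^\infty_0(\Omega\setminus\overline{S})$. Splitting the integral as
$\int_{\Omega^+}+\int_{\Omega^-}$ and integrating by parts in each Lipschitz
subdomain (legitimate since $f^\pm\in H^1(\Omega^\pm)$ and $\Omega^\pm$ are
Lipschitz by \ref{hyp: assum_disl}), the $\partial\Omega$-contributions vanish
because $\mathrm{supp}\,\varphi\subset\Omega\setminus\overline{S}$ is disjoint
from $\partial\Omega$, while the $\Gamma$-contributions reduce to an integral
over $\Gamma\setminus\overline{S}$ (since $\varphi$ vanishes in a neighborhood of
$\overline{S}$). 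Using that the outer normals of $\Omega^+$ and $\Omega^-$ on
$\Gamma$ are opposite, the $\Gamma\setminus\overline{S}$-contributions combine
into a duality pairing of $[f]_{\Gamma\setminus\overline{S}}$ with
$\varphi|_{\Gamma\setminus\overline{S}}$, which is zero by hypothesis. Thus
$g=\nabla f$ distributionally, so $f\in H^1(\Omega\setminus\overline{S})$.

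The isometry property is immediate once both inclusions are in hand, since
$\|u\|_{H^1(\Omega\setminus\overline{S})}^2 =
\|u^+\|_{H^1(\Omega^+)}^2+\|u^-\|_{H^1(\Omega^-)}^2$, which is the natural norm
we equip $\widetilde{H}$ with. The step I expect to require the most care is the
integration-by-parts identity in the reverse direction: the test function
$\varphi$ need not vanish on $\Gamma\setminus\overline{S}$, and since
$\Gamma\setminus\overline{S}$ is an open, typically unclosed Lipschitz surface
with boundary $\partial S$, I would interpret the boundary pairing
$\langle[f]_{\Gamma\setminus\overline{S}},\varphi|_{\Gamma\setminus\overline{S}}
\rangle$ through the $H^{1/2}$/$H^{-1/2}$ duality on compact subsurfaces of
$\Gamma\setminus\overline{S}$ where $\varphi$ is supported, bypassing any
delicate $H^{1/2}_{00}$-type issue at $\partial S$.
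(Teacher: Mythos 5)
Your proposal is correct and follows essentially the same route as the paper's proof: the core step in both is the integration by parts in the Lipschitz subdomains $\Omega^{\pm}$ against test functions supported in $\Omega\setminus\overline{S}$, with the $\Gamma\setminus\overline{S}$ boundary contributions cancelling because the jump $[f]_{\Gamma\setminus\overline{S}}$ vanishes and the normals are opposite. The paper treats the forward inclusion simply by ``reversing the argument,'' which your local-ball trace argument just spells out in more detail.
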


This result is classical (see e.g. \cite{Agranovich} for a proof using
Green's formula in Lipschitz domains, obtained in \cite{Necas}). We
include here the proof for the reader's sake.
\begin{proof}
Let $f\in \widetilde{H}$ and let $\bm{\varphi}\in
C^{\infty}(\overline{\Omega})$ with support in $\Omega\setminus
\overline{S}$. We apply the Divergence Theorem in $\Omega^+$ and $\Omega^-$,
obtaining
\begin{equation*}
    \int_{\Omega^-}\nabla f^-\cdot \bm{\varphi}\,
    d\bm{x}=\int_{\Gamma}f^-\bm{n}\cdot \bm{\varphi}^-\, d\sigma(\bm{x})-
    \int_{\Omega^-}f^-\, \textrm{div}\, \bm{\varphi}\, d\bm{x},
\end{equation*}
where $\bm{n}$ is the unit outer normal vector to $\Omega^-$.
Similarly
	\begin{equation*}
	\int_{\Omega^+}\nabla f^+\cdot \bm{\varphi}\, d\bm{x}=-\int_{\Gamma}f^+\bm{n}\cdot \bm{\varphi}^+\, d\sigma(\bm{x})- \int_{\Omega^+}f^+\, \textrm{div}\, \bm{\varphi}\, d\bm{x}.
	\end{equation*}
Therefore, we find
	\begin{equation*}
	\int_{\Omega^-}\nabla f^-\cdot \bm{\varphi}\, d\bm{x} +
\int_{\Omega^+}\nabla f^+\cdot \bm{\varphi}\, d\bm{x}=
\int_{\Gamma\setminus \overline{S}}\left(f^-\bm{\varphi}^--
f^+\bm{\varphi}^+\right)\cdot \bm{n}\, d\sigma(\bm{x})-\int_{\Omega\setminus
\overline{S}}f\, \textrm{div}\,\bm{\varphi}\, d\bm{x},
	\end{equation*}
noting in the terms on the right that $\bm{\varphi}$ and $\textrm{div}\,
\bm{\varphi}$ have compact support in $\Omega\setminus\overline{S}$ and, as an
$L^2$ function, $f=f^+\chi_{\Omega^+}+f^-\chi_{\Omega^-}$. Moreover,
$\bm{\varphi}$
is regular across $\Gamma\setminus\overline{S}$ by hypothesis and $f^+=f^-$ on
$\Gamma\setminus\overline{S}$, since $f\in\widetilde{H}$, so that
	\begin{equation*}
		\int_{\Gamma\setminus \overline{S}}\left(f^-\bm{\varphi}^--
f^+\bm{\varphi}^+\right)\cdot \bm{n}\,
d\sigma(\bm{x})=\int_{\Gamma\setminus\overline{S}}
\bm{\varphi}\,\left(f^- - f^+\right)\cdot \bm{n}\,
d\sigma(\bm{x})=0.
	\end{equation*}
Consequently,
	\begin{equation*}
		\int_{\Omega\setminus\overline{S}}f\, \textrm{div}\, \bm{\varphi}\, d\bm{x}=-\int_{\Omega^+}\nabla f^+ \cdot \bm{\varphi}\, d\bm{x}-\int_{\Omega^-}\nabla f^- \cdot \bm{\varphi}\, d\bm{x},
	\end{equation*}
which means that the distributional gradient of $f$ is an $L^2$ function in
$\Omega\setminus \overline{S}$ and agrees with
	\begin{equation*}
		\nabla f^+ \chi_{\Omega^+}+\nabla f^- \chi_{\Omega^-}.
	\end{equation*}
Reversing the argument gives the opposite implication.
\end{proof}

For the next lemma, we follow \cite[Proposition 12.8.2]{Agranovich_book},
adapting that result to the case of the Lam\'e operator with discontinuous
coefficients.

\begin{lemma}\label{lem: jump_conor}
	Let $\mathbb{C}\in L^{\infty}(\Omega)$, and let $\bm{\eta}\in
{H}^1(\Omega\setminus\overline{S})$ be a weak solution of the system
$\textrm{div}(\mathbb{C}\widehat{\nabla}\bm{\eta})=\bm{0}$ in
$\Omega\setminus\overline{S}$.
	Then
$[(\mathbb{C}\widehat{\nabla}\bm{\eta})\bm{n}]_{\Gamma\setminus\overline{S}}=\bm
{0}$ in $H^{-\frac{1}{2}}(\Gamma\setminus\overline{S})$.
\end{lemma}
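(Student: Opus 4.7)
The strategy is to test the weak equation against an arbitrary admissible function supported near $\Gamma\setminus\overline{S}$ and to use Green's formula on $\Omega^+$ and $\Omega^-$ separately to identify the jump. First, since $\bm{\eta}^{\pm}\in H^1(\Omega^{\pm})$ and $\mathrm{div}(\mathbb{C}\widehat{\nabla}\bm{\eta}^{\pm})=\bm{0}\in L^2(\Omega^{\pm})$, a standard duality argument (see \cite{Necas}) produces well-defined conormal derivatives $(\mathbb{C}\widehat{\nabla}\bm{\eta}^{\pm})\bm{n}\in H^{-\frac{1}{2}}(\Gamma)$ as the unique elements satisfying, for every $\bm{\psi}\in H^{\frac{1}{2}}(\Gamma)$ with $H^1$-extension $\bm{\Psi}^{\pm}$ into $\Omega^{\pm}$,
\begin{equation*}
\langle (\mathbb{C}\widehat{\nabla}\bm{\eta}^{-})\bm{n},\bm{\psi}\rangle_{\Gamma}
=\int_{\Omega^{-}}\mathbb{C}\widehat{\nabla}\bm{\eta}^{-}:\widehat{\nabla}\bm{\Psi}^{-}\,d\bm{x},\qquad
\langle (\mathbb{C}\widehat{\nabla}\bm{\eta}^{+})\bm{n},\bm{\psi}\rangle_{\Gamma}
=-\int_{\Omega^{+}}\mathbb{C}\widehat{\nabla}\bm{\eta}^{+}:\widehat{\nabla}\bm{\Psi}^{+}\,d\bm{x},
\end{equation*}
the sign change arising because the outer normal to $\Omega^{+}$ on $\Gamma$ is $-\bm{n}$.

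Next, given $\bm{\varphi}\in H^{\frac{1}{2}}_{00}(\Gamma\setminus\overline{S})$, I would extend $\bm{\varphi}$ by zero to an element of $H^{\frac{1}{2}}(\Gamma)$ (which is possible by the very definition of $H^{\frac{1}{2}}_{00}$) and then lift it to a function $\bm{\Phi}\in H^1(\Omega)$ whose support is a relatively compact neighborhood of $\Gamma\setminus\overline{S}$ inside $\Omega\setminus(\overline{S}\cup\partial\Omega)$; this can be done using a cut-off function supported away from $\overline{S}$ and $\partial\Omega$ together with a standard harmonic extension. Since $\bm{\Phi}\in H^1(\Omega)$, its traces on the two sides of $\Gamma\setminus\overline{S}$ coincide (by Lemma~\ref{lem: Htilde}), and in particular $\bm{\Phi}^{+}|_{\Gamma}=\bm{\Phi}^{-}|_{\Gamma}=\widetilde{\bm{\varphi}}$, the zero extension of $\bm{\varphi}$.

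The function $\bm{\Phi}$ is an admissible test function in the weak formulation of $\mathrm{div}(\mathbb{C}\widehat{\nabla}\bm{\eta})=\bm{0}$ in $\Omega\setminus\overline{S}$, since $\bm{\Phi}\in H^1_{\Sigma}(\Omega\setminus\overline{S})$ (it vanishes near $\partial\Omega\supset\Sigma$) and $\bm{\Phi}$ vanishes near $\overline{S}$, so no jump across $S$ is involved. Hence
\begin{equation*}
0=\int_{\Omega\setminus\overline{S}}\mathbb{C}\widehat{\nabla}\bm{\eta}:\widehat{\nabla}\bm{\Phi}\,d\bm{x}
=\int_{\Omega^{+}}\mathbb{C}\widehat{\nabla}\bm{\eta}^{+}:\widehat{\nabla}\bm{\Phi}\,d\bm{x}
+\int_{\Omega^{-}}\mathbb{C}\widehat{\nabla}\bm{\eta}^{-}:\widehat{\nabla}\bm{\Phi}\,d\bm{x},
\end{equation*}
which, using the two Green identities above with $\bm{\Psi}^{\pm}=\bm{\Phi}|_{\Omega^{\pm}}$, yields
\begin{equation*}
0=\langle (\mathbb{C}\widehat{\nabla}\bm{\eta}^{-})\bm{n}-(\mathbb{C}\widehat{\nabla}\bm{\eta}^{+})\bm{n},\widetilde{\bm{\varphi}}\rangle_{\Gamma}
=-\langle [(\mathbb{C}\widehat{\nabla}\bm{\eta})\bm{n}]_{\Gamma},\widetilde{\bm{\varphi}}\rangle_{\Gamma}.
\end{equation*}
Since $\bm{\varphi}\in H^{\frac{1}{2}}_{00}(\Gamma\setminus\overline{S})$ was arbitrary and $H^{\frac{1}{2}}_{00}(\Gamma\setminus\overline{S})$ is the predual of $H^{-\frac{1}{2}}(\Gamma\setminus\overline{S})$, this proves $[(\mathbb{C}\widehat{\nabla}\bm{\eta})\bm{n}]_{\Gamma\setminus\overline{S}}=\bm{0}$ in $H^{-\frac{1}{2}}(\Gamma\setminus\overline{S})$.

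The main obstacle I anticipate is the bookkeeping around the trace spaces: one must verify that the restriction of an $H^{-\frac{1}{2}}(\Gamma)$ functional to $\Gamma\setminus\overline{S}$ via pairing with zero-extensions of $H^{\frac{1}{2}}_{00}(\Gamma\setminus\overline{S})$ elements does characterize the zero element of $H^{-\frac{1}{2}}(\Gamma\setminus\overline{S})$, and that the cut-off/extension producing $\bm{\Phi}$ can indeed be performed while keeping $\bm{\Phi}$ supported away from both $\overline{S}$ and $\partial\Omega$; everything else is a direct application of integration by parts and the duality definition of the conormal derivative.
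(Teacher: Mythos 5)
Your overall strategy --- integrate by parts on $\Omega^{+}$ and $\Omega^{-}$ against a test function that does not see $\overline{S}$ or $\partial\Omega$, and read off the jump of the conormal derivative from the mismatch with the weak formulation --- is the same as the paper's; the paper simply runs it locally, in balls $B_r(\bm{x}_0)$ centered at points of $\Gamma\setminus\overline{S}$ with $B_r(\bm{x}_0)\cap\overline{S}=\emptyset$ and $B_r(\bm{x}_0)\cap\partial\Omega=\emptyset$, testing against $H^1_0(B_r(\bm{x}_0))$ and then covering $\Gamma\setminus\overline{S}$ by finitely many such balls. Your Green's identities and the sign bookkeeping on $\Gamma$ are correct.

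The gap is precisely the point you flagged at the end, and it is a real one: for a general $\bm{\varphi}\in H^{\frac{1}{2}}_{00}(\Gamma\setminus\overline{S})$ the lift $\bm{\Phi}$ you describe does not exist. The relative closure of $\Gamma\setminus\overline{S}$ in $\Gamma$ contains $\partial S\subset\overline{S}$, so there is no neighborhood of $\Gamma\setminus\overline{S}$ that is relatively compact in $\Omega\setminus(\overline{S}\cup\partial\Omega)$, and a function $\bm{\Phi}$ whose trace on $\Gamma$ equals $\widetilde{\bm{\varphi}}$ cannot vanish near $\overline{S}$ unless $\bm{\varphi}$ itself is supported at positive distance from $\partial S$. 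Consequently the identity $0=\int_{\Omega\setminus\overline{S}}\mathbb{C}\widehat{\nabla}\bm{\eta}:\widehat{\nabla}\bm{\Phi}\,d\bm{x}$ is not justified for your $\bm{\Phi}$: the equation is only assumed in the distributional sense in the open set $\Omega\setminus\overline{S}$, so admissible test functions must be (limits in $H^1$ of functions) compactly supported there. The repair is standard: first take $\bm{\varphi}$ with compact support in $\Gamma\setminus\overline{S}$, for which your construction of $\bm{\Phi}$ (cut-off supported away from $\overline{S}$ and $\partial\Omega$ together with a harmonic lift) is legitimate and the computation goes through verbatim; then use the density of such $\bm{\varphi}$ (e.g.\ of $C^{\infty}_0(\Gamma\setminus\overline{S})$) in $H^{\frac{1}{2}}_{00}(\Gamma\setminus\overline{S})$ to conclude that the jump vanishes as an element of the dual space. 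Alternatively, localize as the paper does, which sidesteps the issue entirely.
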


\begin{proof}
	We fix a point $\bm{x}_0\in \Gamma\setminus\overline{S}$ and we
consider a ball $B_r(\bm{x}_0)$ with $r>0$ sufficiently small so that
$B_r(\bm{x}_0)\cap \overline{S}=\emptyset$ and
$B_r(\bm{x}_0)\cap\partial\Omega=\emptyset$.\\
Let $\bm{\varphi}\in H^1_0(B_r(\bm{x}_0))$, then
	\begin{equation}\label{eq: weak_form1}
	\begin{aligned}
	0=\langle\textrm{div}(\mathbb{C}\widehat{\nabla}\bm{\eta}),\bm{\varphi}\rangle=-\int_{B_r(\bm{x}_0)}\mathbb{C}\widehat{\nabla}\bm{\eta}:\widehat{\nabla}\bm{\varphi}\, d\bm{x}.
	\end{aligned}
	\end{equation}
(This identity can be established by approximating $\bm{\varphi}$ with smooth
fields supported in $B_r(\bm{x}_0)$.)
Next we apply Green's identities, which hold for $H^1$-functions, in
$D^+=B_r(\bm{x}_0)\cap \Omega^+$ and $D^-=B_r(\bm{x}_0)\cap \Omega^-$.
Therefore, for all $\bm{\varphi}\in H^1_0(B_r(\bm{x}_0))$,
\begin{equation}\label{eq: weak_form2}
	0=-\int_{D^+}\mathbb{C}\widehat{\nabla}\bm{\eta}:\widehat{\nabla}\bm{\varphi}\, d\bm{x}- \langle \mathbb{C}\widehat{\nabla}\bm{\eta}^+\bm{n},\bm{\varphi}^+\rangle_{(H^{-\frac{1}{2}}(\Gamma\cap \partial D^+),H^{\frac{1}{2}}(\Gamma\cap \partial D^+))}
\end{equation}
and, analogously,
\begin{equation}\label{eq: weak_form3}
	0=-\int_{D^-}\mathbb{C}\widehat{\nabla}\bm{\eta}:\widehat{\nabla}\bm{\varphi}\, d\bm{x}+ \langle \mathbb{C}\widehat{\nabla}\bm{\eta}^-\bm{n},\bm{\varphi}^-\rangle_{(H^{-\frac{1}{2}}(\Gamma\cap \partial D^-),H^{\frac{1}{2}}(\Gamma\cap \partial D^-))}.
\end{equation}
Since $\bm{\varphi}\in H^1_0(B_r(\bm{x}_0))$,
$\bm{\varphi}^+_{|_{\Gamma\cap B_r(\bm{x}_0)}}=\bm{\varphi}^-_{|_{\Gamma\cap
B_r(\bm{x}_0)}}=:\bm{\varphi}_{|_{\Gamma\cap B_r(\bm{x}_0)}}$. Hence, adding
\eqref{eq: weak_form2} and \eqref{eq: weak_form3} gives:
\begin{equation*}
0=-\int_{D^+}\mathbb{C}\widehat{\nabla}\bm{\eta}:\widehat{\nabla}\bm{\varphi}\,
d\bm{x}-\int_{D^-}\mathbb{C}\widehat{\nabla}\bm{\eta}:\widehat{\nabla}\bm{
\varphi}\, d\bm{x}- \langle
[\mathbb{C}\widehat{\nabla}\bm{\eta}\bm{n}],\bm{\varphi}\rangle_{(H^ { -\frac{1}{2} }
(\Gamma\cap B_r(\bm{x}_0)),H^{\frac{1}{2}}(\Gamma\cap B_r(\bm{x}_0)))},
\end{equation*}
where $[,]$ denotes the jump across $\Gamma\cap
B_r(\bm{x}_0)$.
Consequently,
\begin{equation*}
0=-\int_{B_r(\bm{x}_0)}\mathbb{C}\widehat{\nabla}\bm{\eta}:\widehat{\nabla}\bm{
\varphi}\, d\bm{x} - \langle
[\mathbb{C}\widehat{\nabla}\bm{\eta}\bm{n}],\bm{\varphi}\rangle_{(H^{-\frac{1}{2}}
(\Gamma\cap B_r(\bm{x}_0)),H^{\frac{1}{2}}(\Gamma\cap B_r(\bm{x}_0)))},
\end{equation*}
using that, by hypothesis, both $\nabla \bm{\varphi}$ and $\nabla \bm{\eta}$
exists as $L^2$ functions in $B_r(\bm{x}_0)$.
From \eqref{eq: weak_form1} it follows that
\begin{equation*}
      \langle
      [\mathbb{C}\widehat{\nabla}\bm{\eta}\bm{n}],\bm{\varphi}\rangle_{(H^{-\frac{1}{2}}
     (\Gamma\cap B_r(\bm{x}_0)),H^{\frac{1}{2}}(\Gamma\cap B_r(\bm{x}_0)))}=0.
\end{equation*}
Since $\bm{\varphi}$ is an arbitrary function in $H^1_0(B_r(\bm{x}_0))$, we
have that $[\mathbb{C}\widehat{\nabla}\bm{\eta}\bm{n}]_{{\Gamma\cap
B_r(\bm{x}_0)}}=\bm{0}$ in $H^{-\frac{1}{2}}(\Gamma\setminus\overline{S})$. We conclude by covering $\Gamma\setminus
\overline{S}$ with a finite number of balls $B_{r}(\bm{x}_i)$, $i=1,\ldots, N$.
\end{proof}

We are now ready to tackle the well-posedness of Problem \eqref{eq: Pu}.
We begin by addressing the uniqueness of weak solutions.

\begin{theorem}(Uniqueness)\label{th: uniqueness}
	Problem \eqref{eq: Pu} has at most one weak solution in
\mbox{${H}^1_{\Sigma}(\Omega\setminus\overline{S})$}.
\end{theorem}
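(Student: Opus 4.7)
The plan is a standard energy/Korn argument, with the main technical step being the regularity upgrade of the difference of two solutions across the slit surface $S$. Let $\bm{u}_1,\bm{u}_2\in H^1_\Sigma(\Omega\setminus\overline{S})$ be two weak solutions of \eqref{eq: Pu}, and set $\bm{w}:=\bm{u}_1-\bm{u}_2$. By linearity, $\bm{w}$ satisfies the \emph{homogeneous} version of \eqref{eq: Pu}: in particular $\bm{w}|_\Sigma=\bm{0}$, the co-normal derivative vanishes on $\partial\Omega\setminus\Sigma$, the slip jump $[\bm{w}]_S=\bm{0}$ in $H^{\frac{1}{2}}_{00}(S)$, and $[(\mathbb{C}\widehat{\nabla}\bm{w})\bm{n}]_S=\bm{0}$ in $H^{-\frac{1}{2}}(S)$.

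First I would use Lemma \ref{lem: Htilde} to upgrade $\bm{w}$ to an element of $H^1_\Sigma(\Omega)$. Since $\bm{w}\in H^1(\Omega\setminus\overline{S})$, the identification with $\widetilde{H}$ forces $[\bm{w}]_{\Gamma\setminus\overline{S}}=\bm{0}$ automatically. Combined with the vanishing slip jump $[\bm{w}]_S=\bm{0}$, we obtain $[\bm{w}]_{\Gamma}=\bm{0}$ across the entire closed interface, and the same proof as in Lemma \ref{lem: Htilde} (now applied with the full closed surface $\Gamma$ in place of $\Gamma\setminus\overline{S}$) yields $\bm{w}\in H^1(\Omega)$. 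The Dirichlet trace on $\Sigma$ is unchanged, so $\bm{w}\in H^1_\Sigma(\Omega)$ is a legitimate test function in the weak formulation.

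Next I would test the weak equation $\textup{div}(\mathbb{C}\widehat{\nabla}\bm{w})=\bm{0}$ against $\bm{w}$ itself. Applying Green's identity separately in $\Omega^+$ and $\Omega^-$ (as in the proof of Lemma \ref{lem: jump_conor}) and summing, the $\partial\Omega$-integral over $\Sigma$ vanishes by $\bm{w}|_\Sigma=\bm{0}$, the $\partial\Omega$-integral over $\partial\Omega\setminus\Sigma$ vanishes by the homogeneous Neumann condition, the interface integral on $\Gamma\setminus\overline{S}$ vanishes by Lemma \ref{lem: jump_conor}, and the interface integral on $S$ vanishes by the $H^{-\frac{1}{2}}$--$H^{\frac{1}{2}}_{00}$ duality pairing of the vanishing co-normal jump with the (now single-valued, by the first step) trace of $\bm{w}$ on $S$. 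Only the bulk term survives, giving
\begin{equation*}
\int_\Omega \mathbb{C}\widehat{\nabla}\bm{w}:\widehat{\nabla}\bm{w}\,d\bm{x}=0.
\end{equation*}
Strong convexity (\ref{hyp: assum_elastic_tens}) then forces $\widehat{\nabla}\bm{w}=\bm{0}$ a.e. in $\Omega$, so on the connected Lipschitz domain $\Omega$ the field $\bm{w}$ is an infinitesimal rigid motion. Since $\bm{w}|_\Sigma=\bm{0}$ on an open patch of $\partial\Omega$, the six rigid-motion parameters must all vanish, yielding $\bm{w}\equiv\bm{0}$.

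The main obstacle is the $H^1$-regularity upgrade across $S$: it is crucial that the slip jump of $\bm{w}$ vanishes, so that Lemma \ref{lem: Htilde} can be inverted to recover $H^1$-regularity on all of $\Omega$ and make $\bm{w}$ an admissible test function with a single-valued trace on $S$. Once that is established, the remainder is a routine energy identity combined with the classical description of the kernel of the symmetric gradient and the nontriviality of $\Sigma$ as a patch of $\partial\Omega$.
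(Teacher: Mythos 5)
Your argument is correct and follows essentially the same route as the paper's proof: the difference of two solutions has vanishing jumps across $S$, so Lemma \ref{lem: Htilde} upgrades it to an element of $H^1_{\Sigma}(\Omega)$ solving the homogeneous mixed problem \eqref{eq: eq_v}, whose only solution is zero. The paper simply cites the uniqueness of that standard mixed boundary-value problem, whereas you spell out the underlying energy/Korn argument (and the rigid-motion kernel killed by the Dirichlet patch $\Sigma$); the content is the same.
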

\begin{proof}
Assume that there exist two solutions $\bm{u}^1,\bm{u}^2\in
{H}^1_{\Sigma}(\Omega\setminus\overline{S})$. Let
$\bm{v}=\bm{u}^1-\bm{u}^2$. From the transmission conditions on $S$ (see \eqref{eq: Pu}), we have
\begin{equation*}
[\bm{v}]_S=\bm{0},\qquad\qquad [(\mathbb{C} \widehat{\nabla} \bm{v}) \bm{n}]_{S}= \bm{0}.
\end{equation*}
Hence, by Lemma \ref{lem: Htilde},
$\bm{v}\in{H}^1_{\Sigma}(\Omega)$. It follows that
$\bm{v}$ is a weak solution of the
problem
\begin{equation}\label{eq: eq_v}
\begin{cases}
\textrm{div}(\mathbb{C}\widehat{\nabla}\bm{v})=\bm{0}\, & \text{in}\,\,
\Omega\\
(\mathbb{C}\widehat{\nabla}\bm{v})\bm{\nu}=\bm{0} & \textrm{on}\,\,
\partial\Omega\setminus\Sigma\\
\bm{v}=\bm{0} & \textrm{on}\,\, \Sigma,
\end{cases}
\end{equation}
which has a unique solution, $\bm{v}=\bm{0}$.

\end{proof}

\begin{theorem}(Existence)\label{th: existence}
	There exists a weak solution $\bm{u}\in {H}^1_{\Sigma}(\Omega\setminus
\overline{S})$ to Problem  \eqref{eq: Pu}.
\end{theorem}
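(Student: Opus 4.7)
The plan is to follow the Agranovich-style reduction sketched just above the statement. First, I would extend $\bm{g}$ by zero on $\Gamma\setminus\overline{S}$ to $\widetilde{\bm{g}}\in H^{\frac{1}{2}}(\Gamma)$ as in Remark \ref{rem: extent_g}, and then recast Problem \eqref{eq: Pu} as the problem of finding a single ``glueing'' Neumann trace $\bm{\psi}$ on $\Gamma$ such that two auxiliary boundary value problems, one in $\Omega^-$ and one in $\Omega^+$, produce pieces $\bm{u}^-$ and $\bm{u}^+$ whose traces on $\Gamma$ have jump exactly equal to $\widetilde{\bm{g}}$.

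Concretely, for each admissible $\bm{\psi}\in H^{-\frac{1}{2}}(\Gamma)$ satisfying the rigid-motion compatibility $\langle\bm{\psi},\bm{r}\rangle=0$ for every $\bm{r}\in\mathcal{R}$, I would solve the pure Neumann problem $\textrm{div}(\mathbb{C}\widehat{\nabla}\bm{u}^-)=\bm{0}$ in $\Omega^-$ with $(\mathbb{C}\widehat{\nabla}\bm{u}^-)\bm{n}=\bm{\psi}$ on $\Gamma$, sought in $\mathring{H}^1(\Omega^-)$, and separately the mixed problem $\textrm{div}(\mathbb{C}\widehat{\nabla}\bm{u}^+)=\bm{0}$ in $\Omega^+$ with $\bm{u}^+=\bm{0}$ on $\Sigma$, $(\mathbb{C}\widehat{\nabla}\bm{u}^+)\bm{\nu}=\bm{0}$ on $\partial\Omega\setminus\Sigma$, and $(\mathbb{C}\widehat{\nabla}\bm{u}^+)(-\bm{n})=-\bm{\psi}$ on $\Gamma$ (noting that $-\bm{n}$ is the outer normal to $\Omega^+$ along $\Gamma$), sought in $H^1_\Sigma(\Omega^+)$. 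Well-posedness of each auxiliary problem follows from Lax--Milgram applied to the bilinear form $a(\bm{u},\bm{v})=\int \mathbb{C}\widehat{\nabla}\bm{u}:\widehat{\nabla}\bm{v}\,d\bm{x}$, whose coercivity is supplied by the second Korn inequality together with \ref{hyp: assum_elastic_tens}: in $\Omega^-$ the normalization of $\mathring{H}^1(\Omega^-)$ eliminates the rigid-motion kernel, while in $\Omega^+$ the Dirichlet condition on $\Sigma$ already rules out nontrivial rigid motions.

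The main obstacle is then to choose $\bm{\psi}$ so that the jump $\bm{u}^+|_\Gamma-\bm{u}^-|_\Gamma$ equals $\widetilde{\bm{g}}$. I would introduce the bounded linear operator $T\bm{\psi}:=\bm{u}^+|_\Gamma-\bm{u}^-|_\Gamma$ from the admissible Neumann-data space into a suitable closed subspace of $H^{\frac{1}{2}}(\Gamma)$, and the heart of the argument is to show that $T$ is an isomorphism onto that target. My expectation is that $-T$ can be identified with the sum of two Neumann-to-Dirichlet (inverse Steklov--Poincar\'e) operators, one for $\Omega^+$ and one for $\Omega^-$, which carries a coercive-plus-compact structure in the natural norms; Fredholm theory then reduces surjectivity to injectivity, and injectivity in turn follows from Theorem \ref{th: uniqueness} applied to the function obtained by gluing $\bm{u}^+$ and $\bm{u}^-$ when the jump vanishes (after, if necessary, correcting $\widetilde{\bm{g}}$ by a rigid motion in $\Omega^-$, which is precisely the a posteriori adjustment mentioned in the discussion preceding the theorem).

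Finally, with $\bm{\psi}^*$ chosen so that $T\bm{\psi}^*=\widetilde{\bm{g}}$, I would set $\bm{u}:=\bm{u}^+\chi_{\Omega^+}+\bm{u}^-\chi_{\Omega^-}$. Because $\widetilde{\bm{g}}$ vanishes on $\Gamma\setminus\overline{S}$, Lemma \ref{lem: Htilde} gives $\bm{u}\in H^1(\Omega\setminus\overline{S})$, and the Dirichlet condition on $\Sigma$ inherited from $\bm{u}^+$ places $\bm{u}$ in $H^1_\Sigma(\Omega\setminus\overline{S})$. The transmission condition $[(\mathbb{C}\widehat{\nabla}\bm{u})\bm{n}]_S=\bm{0}$ is automatic, since both $\bm{u}^\pm$ carry conormal derivative $\bm{\psi}^*$ on $\Gamma\supset S$, while the continuity of the conormal derivative across $\Gamma\setminus\overline{S}$---needed to conclude that $\bm{u}$ solves the Lam\'e system weakly across that portion---is provided by Lemma \ref{lem: jump_conor}, completing the verification that $\bm{u}$ is a weak solution of Problem \eqref{eq: Pu}.
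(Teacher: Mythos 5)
Your overall architecture coincides with the paper's: the same splitting into the Neumann problem \eqref{eq: P_Neumann1} in $\Omega^-$ and the mixed problem \eqref{eq: P_Neumann2} in $\Omega^+$, the same reduction to finding a single Neumann trace $\bm{\varphi}$ on $\Gamma$ with $(N^+-N^-)\bm{\varphi}=\widetilde{\bm{g}}$, and the same gluing and verification via Lemmas \ref{lem: Htilde} and \ref{lem: jump_conor}. The divergence --- and the gap --- is at the one step that actually carries the proof, namely the invertibility of the trace-jump operator. You assert, as an ``expectation'', that the sum of the two Neumann-to-Dirichlet operators has a coercive-plus-compact structure and then invoke the Fredholm alternative together with injectivity. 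Neither the compactness nor the coercivity of the putative principal part is established, and with only $\mathbb{C}\in L^{\infty}(\Omega)$ and a Lipschitz $\Gamma$ there is no obvious smoothing from which to extract a compact remainder. As written, the key step is unproven.

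The paper closes this step without any Fredholm argument: the operator $-N^{+}+N^{-}$ is \emph{directly} coercive on $H^{-\frac{1}{2}}(\Gamma)$. Testing the weak formulations \eqref{eq: weak_form_+} and \eqref{eq: weak_form_-} with $\bm{u}^{\pm}$ themselves identifies $\langle\bm{\varphi},(-N^{+}+N^{-})\bm{\varphi}\rangle_{\Gamma}$ with the sum of the two elastic energies, which by \ref{hyp: assum_elastic_tens}, Korn and Poincar\'e dominates $\|\bm{u}^{+}\|^{2}_{H^{1}(\Omega^{+})}+\|\bm{u}^{-}\|^{2}_{H^{1}(\Omega^{-})}$ (see \eqref{eq:coerc_N+} and \eqref{eq:coerc_N-}); testing the same formulations with bounded $H^{1}$ extensions of arbitrary $\bm{v}\in H^{\frac{1}{2}}(\Gamma)$ gives the duality bound \eqref{eq:Phi_norm}, $\|\bm{\varphi}\|_{H^{-\frac{1}{2}}(\Gamma)}\leq C\|\bm{u}^{\pm}\|_{H^{1}(\Omega^{\pm})}$. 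Combining the two yields $\|\bm{\varphi}\|^{2}_{H^{-\frac{1}{2}}(\Gamma)}\leq C\langle\bm{\varphi},(-N^{+}+N^{-})\bm{\varphi}\rangle_{\Gamma}$, and Lax--Milgram finishes. If you wish to keep the Fredholm framing, you would still have to prove essentially this energy identity to exhibit the coercive part, at which point the compact perturbation and the injectivity-via-Theorem \ref{th: uniqueness} step become superfluous. (Your restriction to Neumann data orthogonal to rigid motions, and the attendant correction of $\widetilde{\bm{g}}$ by a rigid motion, is a reasonable bookkeeping refinement of the solvability of \eqref{eq: P_Neumann1}, but it is not where the difficulty lies.)
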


\begin{proof}
The strategy is to construct a weak solution of Problem \eqref{eq: Pu} from the solutions of two auxiliary two boundary-value problems, one in $\Omega^-$ and one $\Omega^+$, that are connected through a suitably chosen Neumann boundary condition on the interface $\Gamma$.
Specifically, we consider the following Neumann boundary-value problem in
$\mathring{H}^1(\Omega^-)$ (the space is chosen in order to avoid rigid motions
in $\Omega^-$):
	\begin{equation}\label{eq: P_Neumann1}
		\begin{cases}
		\textrm{div}(\mathbb{C}\widehat{\nabla}\bm{u}^-)=\bm{0}, & \textrm{in}\,\,\, \Omega^-\\
		(\mathbb{C}\widehat{\nabla}\bm{u}^-)\bm{n}=\bm{\varphi}, & \textrm{on}\,\,\, \Gamma
		\end{cases}
	\end{equation}
and the following mixed-boundary-value problem in  ${H}^1_{\Sigma}(\Omega^+)$:
	\begin{equation}\label{eq: P_Neumann2}
	\begin{cases}
	\textrm{div}(\mathbb{C}\widehat{\nabla}\bm{u}^+)=\bm{0}, & \textrm{in}\,\,\, \Omega^+\\
	(\mathbb{C}\widehat{\nabla}\bm{u}^+)\bm{\nu}=\bm{0}, & \textrm{on}\,\,\, \partial\Omega\setminus\Sigma\\
	\bm{u}^+=\bm{0}, & \textrm{on}\,\,\, \Sigma\\
	(\mathbb{C}\widehat{\nabla}\bm{u}^+)\bm{n}=\bm{\varphi}, & \textrm{on}\,\,\, \Gamma,
	\end{cases}
	\end{equation}
where $\Omega^-$ and $\Omega^+$ are defined in \ref{hyp:
assum_disl}. We denote the traces of $\bu^\pm$ in $H^{\frac{1}{2}}(\Gamma)$ by $\bu^\pm_\Gamma$.
The key point of the proof is to identify $\bm{\varphi}$ in order to represent
the solution $\bm{u}$ of \eqref{eq: Pu} as
$\bm{u}=\bm{u}^-\chi_{\Omega^-}+\bm{u}^+\chi_{\Omega^+}$, where
$\chi_{\Omega^-}$ and $\chi_{\Omega^+}$ are the characteristic functions of the
sets $\Omega^-$ and $\Omega^+$, respectively.
To this end, we define the bounded Neumann-Dirichlet operators
	\begin{equation*}
		N^+:H^{-\frac{1}{2}}(\Gamma)\to H^{\frac{1}{2}}(\Gamma),\qquad\qquad
		N^-:H^{-\frac{1}{2}}(\Gamma)\to H^{\frac{1}{2}}(\Gamma),
	\end{equation*}
related to \eqref{eq: P_Neumann2} and \eqref{eq: P_Neumann1}, respectively. Then, since $N^+\bm{\varphi}=\bm{u}^+_{\Gamma}$ and $N^-\bm{\varphi}:=\bm{u}^-_{\Gamma}$, and recalling that $[\bm{u}]_\Gamma=\widetilde{\bm{g}}$, where $\widetilde{\bm{g}}$ is the extension of $\bm{g}$ on $\Gamma\setminus S$, as defined in \eqref{eq: g_extension},
we need to identify $\bm{\varphi}\in H^{-\frac{1}{2}}(\Gamma)$ such that
			\begin{equation}\label{eq: oper_eq}
				\bm{u}^+_{\Gamma}-\bm{u}^-_{\Gamma}=(N^+-N^-)\bm{\varphi}=\widetilde{\bm{g}}.
			\end{equation}
\\
The invertibility of the operator $N^+-N^-$ guarantees that
$\bm{\varphi}=(N^+-N^-)^{-1}(\widetilde{\bm{g}})$, and  follows from the
continuity of both the Neumann-to-Dirichlet and the Dirichlet-to-Neumann
maps. Such continuity is well known. We briefly outline here a proof of
invertibility  in our setting for the reader's sake.

First, by using the weak formulation of
\eqref{eq: P_Neumann1} in $\mathring{H}^1(\Omega^-)$  and \eqref{eq: P_Neumann2}
in ${H}^1_{\Sigma}(\Omega^+)$, we find a relation between the quadratic form
associated to \eqref{eq: P_Neumann1} and  $\langle \bm{\varphi},
N^-\bm{\varphi}\rangle_{\Gamma}$, and between the quadratic form
associated to \eqref{eq: P_Neumann2}  and $\langle
\bm{\varphi}, N^+\bm{\varphi}\rangle_{\Gamma}$.
Indeed, from the weak formulation of problems \eqref{eq: P_Neumann1} and
\eqref{eq: P_Neumann2}, we find that
	\begin{equation}\label{eq: weak_form_+}
		\int_{\Omega^+}\mathbb{C}\widehat{\nabla}\bm{u}^+\cdot
\widehat{\nabla}\bm{v}^+\, d\bm{x}=-\langle \bm{\varphi},
\bm{v}^+\rangle_{\Gamma},\qquad\qquad \forall \bm{v}^+\in
{H}^1_{\Sigma}(\Omega^+),
	\end{equation}
as $\bm{n}$ points inwards in $\Omega^{+}$,
and that
	\begin{equation}\label{eq: weak_form_-}
	\int_{\Omega^-}\mathbb{C}\widehat{\nabla}\bm{u}^-\cdot
\widehat{\nabla}\bm{v}^-\, d\bm{x}=\langle \bm{\varphi},
\bm{v}^-\rangle_{\Gamma},\qquad\qquad \forall \bm{v}^-\in
 H^1(\Omega^-).
	\end{equation}
Next, we observe that we can extend any function $\bm{v}\in H^{\frac{1}{2}}(\Gamma)$ to,
respectively, functions $\bm{v}^+\in H^1_{\Sigma}(\Omega^+)$ and
$\bm{v}^-\in H^1(\Omega^-)$, for instance by solving suitable Dirichlet
problems for the Laplace operator in $\Omega^+$ and $\Omega^-$. Then, the
above identities imply:
\[
     |\langle \bm{\varphi}, \bm{v}\rangle_{\Gamma}|\leq
  C_{\pm}\,\|\bm{u}^\pm\|_{H^1(\Omega^\pm)} \,
\|\bm{v}^\pm\|_{H^1(\Omega^{\pm})}
   \leq C_{\pm}\, \|\bm{u}^\pm\|_{H^1(\Omega^\pm)} \,
\|\bm{v}\|_{H^{\frac{1}{2}}(\Gamma)}.
\]
Using the definition of the norm in
$H^{-\frac{1}{2}}(\Gamma)$ as the operator norm of functionals on $H^{\frac{1}{2}}(\Gamma)$,
it follows that
\begin{equation} \label{eq:Phi_norm}
    \|\bm{\varphi}\|_{H^{-\frac{1}{2}}(\Gamma)} \leq C_{\pm}
 \|\bm{u}^{\pm}\|_{H^1(\Omega^{\pm})}.
\end{equation}
Moreover, by choosing $\bm{v}^+=\bm{u}^+$ in \eqref{eq:
weak_form_+} and $\bm{v}^-=\bm{u}^-$  in
\eqref{eq: weak_form_-}, we have:
\begin{equation*}
\int_{\Omega^+}\mathbb{C}\widehat{\nabla}\bm{u}^+\cdot
\widehat{\nabla}\bm{u}^+\, d\bm{x}=-\langle \bm{\varphi},
N^+\bm{\varphi}\rangle_{\Gamma},
\end{equation*}
and
\begin{equation*}
\int_{\Omega^-}\mathbb{C}\widehat{\nabla}\bm{u}^-\cdot
\widehat{\nabla}\bm{u}^-\, d\bm{x}=\langle \bm{\varphi},
N^-\bm{\varphi}\rangle_{\Gamma}.
\end{equation*}
Then \ref{hyp: assum_elastic_tens}, Korn's and Poincar\'e's inequalities
(see e.g.
\cite{OleinikShamaevYosifian})
give that
\begin{equation}\label{eq:coerc_N+}
		-\langle \bm{\varphi},
N^+\bm{\varphi}\rangle_{\Gamma}=\int_{\Omega^+}\mathbb{C}\widehat{\nabla}\bm{u}
^+\cdot \widehat{\nabla}\bm{u}^+\, d\bm{x}\geq C
\|\bm{u}^+\|^2_{{H}^1(\Omega^+)},
\end{equation}
and that
\begin{equation}\label{eq:coerc_N-}
	\langle \bm{\varphi},
N^-\bm{\varphi}\rangle_{\Gamma}=\int_{\Omega^-}\mathbb{C}\widehat{\nabla}\bm{u}
^-\cdot \widehat{\nabla}\bm{u}^-\, d\bm{x}\geq C
\|\bm{u}^-\|^2_{\mathring{H}^1(\Omega^-)}.
\end{equation}
Therefore, by using \eqref{eq:Phi_norm} in both
\eqref{eq:coerc_N+} and  \eqref{eq:coerc_N-}, we can
establish the coercivity of the bilinear form associated to Equation
\eqref{eq: oper_eq}, that is,
\begin{equation}
	\|\bm{\varphi}\|^2_{H^{-\frac{1}{2}}(\Gamma)}\leq C \langle \bm{\varphi},
     (-N^++N^-)\bm{\varphi}\rangle_{\Gamma}.
\end{equation}
The continuity of this form follows directly from the continuity of the
solution operators for \eqref{eq: P_Neumann1}-\eqref{eq: P_Neumann2} and the
Trace Theorem.
The Lax-Milgram Theorem then ensures that there exists a unique solution
$\bm{\varphi}\in H^{-\frac{1}{2}}(\Gamma)$ such that
\begin{equation}
		\langle \bm{\psi}, (-N^++N^-)\bm{\varphi}\rangle_{\Gamma}=\langle \bm{\psi},-\widetilde{\bm{g}}\rangle_{\Gamma},\qquad \forall \bm{\psi}\in H^{-\frac{1}{2}}(\Gamma),
\end{equation}
namely the operator $-N^++N^-$ is invertible.

With this choice of $\bm{\varphi}$, Problems \eqref{eq:
P_Neumann1} and \eqref{eq: P_Neumann2} admit  unique solutions $\bm{u}^-\in
\mathring{H}^1(\Omega^-)$ and $\bm{u}^+\in {H}^1_{\Sigma}(\Omega^+)$,
respectively. Next, we let
\begin{equation*}
     \bm{u}=\bm{u}^-\chi_{\Omega^-}+\bm{u}^+\chi_{\Omega^+}.
\end{equation*}
Then, $\bm{u}=\bm{u}^-\in H^1(\Omega^-)$, $\bm{u}=\bm{u}^+\in
H^1(\Omega^+)$,  $\bm{u}$ is a distributional solution of
$\text{div}(\mathbb{C} \widehat \nabla \bm{u})= \bm {0}$ in $\Omega^+$ and
$\Omega^-$. To conclude, we show that $\bm{u}$ is a weak solution of \eqref{eq: Pu}. By construction, it follows that $\bm{u}$ satisfies the boundary conditions on $\partial\Omega$ in trace
sense. Again by construction
\begin{equation*}
[\bm{u}]_{\Gamma}=\bm{u}^+_{\Gamma}-\bm{u}^-_{\Gamma}=\widetilde{\bm{g}}
\quad \text{in } H^{\frac{1}{2}}(\Gamma).
\end{equation*}
That is, by \eqref{eq: g_extension},
\begin{equation}\label{eq: null_jump}
[\bm{u}]_{\Gamma\setminus \overline{S}}=\bm{0},\qquad\qquad [\bm{u}]_{{S}}=\bm{g},
\end{equation}
hence, by Lemma \ref{lem: Htilde}, it follows
that $\bm{u}\in H^1(\Omega\setminus\overline{S})$.
Moreover,
\begin{equation}\label{eq: jump_conorm}
   [\mathbb{C}\widehat{\nabla}\bm{u}\,\bm{n}]_{\Gamma}=\bm{0} \quad \text{in }
    H^{-\frac{1}{2}}(\Gamma),
\end{equation}
which follows immediately by construction. In particular,
$[\mathbb{C}\widehat{\nabla}\bm{u}\,\bm{n}]_{S}=\bm{0}$.
Now, recalling that $\bm{u}$ is a weak solution in $\Omega^-$ and in $\Omega^+$
and satisfies \eqref{eq: jump_conorm}, reversing the steps in the proof of
Lemma \ref{lem: jump_conor}, we obtain that $\bm{u}$ is a weak solution of
$\textrm{div}(\mathbb{C}\widehat{\nabla}\bm{u})=\bm{0}$ in
$\Omega\setminus\overline{S}$. In fact, we fix a point $\bm{x}_0\in\Gamma\setminus\overline S$ and we consider a ball $B_r(\bm{x}_0)$ with $r>0$ sufficiently small such that $B_r(\bm{x}_0)\cap \overline{S}=\emptyset$. Let $\varphi\in H^1_0(B_r(\bm{x}_0))$. We apply Green's identity in $D^+=B_r(\bm{x}_0)\cap \Omega^+$ and $D^-=B_r(\bm{x}_0)\cap \Omega^-$. Since $\bm{u}$ is a weak solution in $D^+$ and $D^-$, we get
\begin{equation}\label{eq: d+}
0=-\int_{D^+}\mathbb{C}\widehat{\nabla}\bm{u}:\widehat{\nabla}\bm{\varphi}\, d\bm{x}- \langle \mathbb{C}\widehat{\nabla}\bm{u}^+\bm{n},\bm{\varphi}^+\rangle_{(H^{-\frac{1}{2}}(\Gamma\cap \partial D^+),H^{\frac{1}{2}}(\Gamma\cap \partial D^+))}
\end{equation}
for all $\bm{\varphi}\in H^1_0(B_r(\bm{x}_0))$ and, analogously,
\begin{equation}\label{eq: d-}
0=-\int_{D^-}\mathbb{C}\widehat{\nabla}\bm{u}:\widehat{\nabla}\bm{\varphi}\, d\bm{x}+ \langle \mathbb{C}\widehat{\nabla}\bm{u}^-\bm{n},\bm{\varphi}^-\rangle_{(H^{-\frac{1}{2}}(\Gamma\cap \partial D^-),H^{\frac{1}{2}}(\Gamma\cap \partial D^-))}.
\end{equation}
As $\bm{\varphi}\in H^1_0(B_r(\bm{x}_0))$,
$\bm{\varphi}^+_{|_{\Gamma\cap B_r(\bm{x}_0)}}=\bm{\varphi}^-_{|_{\Gamma\cap
		B_r(\bm{x}_0)}}=:\bm{\varphi}_{|_{\Gamma\cap B_r(\bm{x}_0)}}$ in the trace sense. Hence, adding
\eqref{eq: d+} and \eqref{eq: d-} gives:
\begin{equation*}
0=-\int_{D^+}\mathbb{C}\widehat{\nabla}\bm{u}:\widehat{\nabla}\bm{\varphi}\,
d\bm{x}-\int_{D^-}\mathbb{C}\widehat{\nabla}\bm{u}:\widehat{\nabla}\bm{
	\varphi}\, d\bm{x}- \langle
[\mathbb{C}\widehat{\nabla}\bm{u}\bm{n}],\bm{\varphi}\rangle_{(H^ { -\frac{1}{2} }
	(\Gamma\cap B_r(\bm{x}_0)),H^{\frac{1}{2}}(\Gamma\cap B_r(\bm{x}_0)))},
\end{equation*}
where $[,]$ denotes the jump across $\Gamma\cap
B_r(\bm{x}_0)$.
Then,using the fact that $[\mathbb{C}\widehat{\nabla}\bm{u}\bm{n}]=\bm{0}$ on $\Gamma\cap
B_r(\bm{x}_0)$ and $\widehat{\nabla}\bm{u}\in L^2(B_r(\bm{x}_0))$, we find that
\begin{equation*}
0=\int_{B_r(\bm{x}_0)}\mathbb{C}\widehat{\nabla}\bm{u}:\widehat{\nabla}\bm{
	\varphi}\, d\bm{x}. 
\end{equation*}
Therefore, $\bm{u}$ is a weak solution in $\Omega\setminus\overline{S}$, given that $\bm{x}_0$ and $\bm{\varphi}\in H^1_0(B_r(\bm{x}_0))$ are arbitrary.
\end{proof}

We note that, from the proof of the existence theorem above, a weak solution is
also a variational solution in the following sense:  $\bm{u}\in
H^1_{\Sigma}(\Omega^+)$, $\bm{u}\in {H}^1(\Omega^-)$,
$[\bm{u}]_{\Gamma}=\widetilde{\bm{g}}$ in $H^{\frac{1}{2}}(\Gamma)$ and
$\bm{u}$ is such that, for every $\bm{v}\in H^1_{\Sigma}(\Omega)$,
	\begin{equation}
		\int_{\Omega^+}\mathbb{C}\widehat{\nabla}\bm{u}\cdot
\widehat{\nabla}\bm{v}\, d\bm{x}+
\int_{\Omega^-}\mathbb{C}\widehat{\nabla}\bm{u}\cdot
\widehat{\nabla}\bm{v}\,d\bm{x}=0.
	\end{equation}
We observe that a variational solution could also be obtained by a
suitable lifting operator of the jump on $\Gamma$ to $\Omega\setminus \Gamma$, analogous to that utilized in the treatment of nonhomogeneous Dirichlet boundary conditions, reducing the problem to a source problem with homogeneous jump conditions on $S$ (see for example \cite{Lady,Zwieten_et_al13}).

\begin{corollary}
	There exists a unique solution $\bm{u}\in
H^1_{\Sigma}(\Omega\setminus\overline{S})$ to Problem \eqref{eq: Pu}.
\end{corollary}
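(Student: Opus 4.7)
The corollary is an immediate consequence of the two theorems just established. The plan is simply to combine Theorem \ref{th: existence} and Theorem \ref{th: uniqueness}.

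First, I would invoke Theorem \ref{th: existence}, which constructs a weak solution $\bm{u}\in H^1_{\Sigma}(\Omega\setminus\overline{S})$ by solving the auxiliary Neumann problem \eqref{eq: P_Neumann1} in $\Omega^-$ (in the quotient space $\mathring{H}^1(\Omega^-)$ to kill rigid motions) and the mixed problem \eqref{eq: P_Neumann2} in $\Omega^+$, with the common Neumann datum $\bm{\varphi}=(-N^++N^-)^{-1}(-\widetilde{\bm{g}})$. Gluing the two pieces produces $\bm{u}=\bm{u}^-\chi_{\Omega^-}+\bm{u}^+\chi_{\Omega^+}$, which satisfies all the conditions of the weak formulation of \eqref{eq: Pu}, giving existence.

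Uniqueness follows at once from Theorem \ref{th: uniqueness}: if $\bm{u}^1,\bm{u}^2\in H^1_{\Sigma}(\Omega\setminus\overline{S})$ are both weak solutions, their difference has vanishing jump and vanishing conormal jump on $S$, so by Lemma \ref{lem: Htilde} it lies in $H^1_{\Sigma}(\Omega)$ and solves the homogeneous mixed problem \eqref{eq: eq_v}, whose only solution is $\bm{0}$.

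Since nothing further is needed, the only subtle point worth a sentence is to note that the a posteriori normalization: although $\bm{u}^-$ is constructed in $\mathring{H}^1(\Omega^-)$, the uniqueness statement is in the full space $H^1_{\Sigma}(\Omega\setminus\overline{S})$, and this is consistent because any rigid motion added to $\bm{u}^-$ in $\Omega^-$ would destroy the transmission condition $[\bm{u}]_\Gamma=\widetilde{\bm{g}}$ unless it is zero, so the solution obtained via the auxiliary decomposition is the unique element of $H^1_{\Sigma}(\Omega\setminus\overline{S})$ solving \eqref{eq: Pu}. There is no real obstacle: the corollary is essentially a bookkeeping statement packaging the two preceding theorems.
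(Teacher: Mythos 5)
Your proposal is correct and matches the paper exactly: the corollary is stated there without a separate proof precisely because it is the immediate combination of Theorem \ref{th: existence} and Theorem \ref{th: uniqueness}. Your closing remark about the \emph{a posteriori} consistency of the normalization in $\mathring{H}^1(\Omega^-)$ is also in line with the paper's own earlier comment on verifying those conditions after the fact.
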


We observe that other types of boundary conditions can, in principle, be imposed
on the buried part $\Sigma$ of $\partial\Omega$. For example, one can impose a
non-homogeneous traction there, modeling the load of contiguous rock formations
on $\Omega$ itself.

\begin{remark}
The approach to proving well-posedness for \eqref{eq: Pu} can be adapted to
other boundary value problems as well, such as Neumann problems
with non-homogeneous boundary conditions on $\partial\Omega$. In fact, given
$\bm{h}\in H^{-\frac{1}{2}}(\partial\Omega)$, one can show that there exists a
unique solution $\bm{u}_N\in \mathring{H}^1(\Omega\setminus\overline{S})$ for
the following problem:
		\begin{equation}\label{eq: Pu_nh}
			\begin{cases}
				\textrm{\textup{div}}\, (\mathbb{C}\widehat{\nabla}\bm{u}_N)=\bm{0},& \textrm{in}\,\,
				\Omega \setminus \overline{S},\\
				(\mathbb{C}\widehat{\nabla}\bm{u}_N)\bm{\nu}=\bm{h}, & \textrm{on}\,\, \partial \Omega, \\
				[\bm{u}_N]_{S}=\bm{g}, \\
				[(\mathbb{C}\widehat{\nabla}\bm{u}_N)\bm{n}]_{S}=\bm{0} ,\\
			\end{cases}
		\end{equation}
The proof of uniqueness in $\mathring{H}^1(\Omega\setminus\overline{S})$ follows exactly as in Theorem \ref{th: uniqueness}.
For the proof of existence, we notice that due to the linearity property of \eqref{eq: Pu_nh},
$\bm{u}_N$ can be decomposed as $\bm{u}_N:=\mathring{\bm{u}}+\bm{w}$, where
$\mathring{\bm{u}}\in\mathring{H}^1(\Omega\setminus\overline{S})$ is the unique solution to
\begin{equation}\label{eq: Pu_N}
\begin{cases}
\textrm{\textup{div}}\, (\mathbb{C}\widehat{\nabla}\mathring{\bm{u}})=\bm{0},& \textrm{in}\,\,
\Omega \setminus \overline{S},\\
(\mathbb{C}\widehat{\nabla}\mathring{\bm{u}})\bm{\nu}=\bm{0}, & \textrm{on}\,\, \partial \Omega, \\
[\mathring{\bm{u}}]_{S}=\bm{g}, \\
[(\mathbb{C}\widehat{\nabla}\mathring{\bm{u}})\bm{n}]_{S}=\bm{0} ,\\
\end{cases}
\end{equation}
and	$\bm{w}\in \mathring{H}^1(\Omega)$ is solution to
\begin{equation}\label{eq: Pw}
\begin{cases}
\textrm{\textup{div}}\, (\mathbb{C}\widehat{\nabla}\bm{w})=\bm{0},& \textrm{in}\,\,
\Omega \setminus \overline{S},\\
(\mathbb{C}\widehat{\nabla}\bm{w})\bm{\nu}=\bm{h}, & \textrm{on}\,\, \partial \Omega, \\
[\bm{w}]_{S}=\bm{0}, \\
[(\mathbb{C}\widehat{\nabla}\bm{w})\bm{n}]_{S}=\bm{0}.
\end{cases}
\end{equation}
The proof of existence of a solution
$\mathring{\bm{u}}\in\mathring{H}^1(\Omega\setminus\overline{S})$ for \eqref{eq:
Pu_N} then follows the same ideas as in  Theorem \ref{th: existence}, but with
the simplification that both $\bm{u}^+$ and $\bm{u}^-$ belong now to the same
space $\mathring{H}^1$. Problem \eqref{eq: Pw} is reduced to a
standard transmission problem, hence the existence of a unique solution in
$\mathring{H}^1(\Omega)$ follows easily.
\end{remark}

\section{The Inverse Problem: a uniqueness result}  \label{sec:inverse}
In this section we address the uniqueness for the inverse dislocation problem,
which consists in identifying the dislocation $S$ and the slip $\bm{g}$ on it
from displacement measurements made at the surface of the Earth. Uniqueness will
be proved under additional assumptions on the geometry and the data for Problem
\eqref{eq: Pu}. In particular, we consider a domain $\Omega$
which is partitioned in finitely-many Lipschitz subdomains, we assume that
the elasticity tensor is isotropic with
Lam\'e coefficients Lipschitz continuous in each subdomain, and we take the
dislocation surface to be a
graph with respect to a fixed, but arbitrary, coordinate frame. Such
assumptions are not unrealistic in the context of geophysical applications and
underscores the ill-posedness of the inverse problems without additional {\em a
priori} information.

Specifically, in additions to  \ref{hyp: assum_elastic_tens} and
\ref{hyp: assum_disl}, we assume the following:
\begin{description}
\item[\namedlabel{hyp: assum_domain}{Assumption 3} - domain and
partition] We denote by $\Xi\subseteq \partial\Omega\setminus
\overline{\Sigma}$ an open patch of the boundary where the measurements of
the displacement field are given.
Moreover, we assume that
\begin{equation*}
	\overline{\Omega}=\bigcup\limits_{k=1}^{N} \overline{D_k}
\end{equation*}
where $D_k$, for $k=1,\cdots N$, are pairwise non-overlapping bounded Lipschitz
domains.
We assume, without loss of generality, that $\Xi$ is contained in $\partial D_1$.
\end{description}

\begin{description}
\item[\namedlabel{hyp: assum_elastic_tens_ip}{Assumption 4} - elasticity
tensor] The elasticity tensor $\mathbb{C}=\mathbb{C}(\bm{x})$ is assumed
isotropic in each element of the partition of $\Omega$, i.e.,
\begin{equation}\label{ass: elast tensor}
	\mathbb{C}(\bm{x})=\sum_{k=1}^{N}\mathbb{C}_k(\bm{x}) \chi_{D_k}(\bm{x}),\qquad \mathbb{C}_k(\bm{x}):=\lambda_k(\bm{x})\mathbf{I}\otimes \mathbf{I}+2\mu_k(\bm{x})\mathbb{I},
\end{equation}
where $\lambda_k=\lambda_k(\bm{x})$ and $\mu_k=\mu_k(\bm{x})$, for
$k=1,\cdots, N$, are the Lam\'e coefficients related to the subdomain $D_k$,
$\mathbf{I}$ and $\mathbb{I}$ the identity matrix and the identity fourth-order
tensor, respectively. Each Lam\'e
parameter, $\lambda_k, \mu_k$, for $k=1,\cdots,N$, belongs to
$C^{0,1}(\overline{D_k})$, that is, there exists $M>0$ such that
\begin{equation}\label{ass: c0,1 regularity}
   \begin{aligned}
\|\mu_k\|_{C^{0,1}(\overline{D_k})}+\|\lambda_k\|_{C^{0,1}(\overline{D_k})}
\leq M,
   \end{aligned}
\end{equation}
	with
$\|\cdot\|_{C^{0,1}(\overline{D_k})}=\|\cdot\|_{L^{\infty}(\overline{D_k})}
+\|\nabla\cdot\|_{L^{\infty}
		(\overline{D_k})}$.
Finally, there
exist two positive constants
$\alpha_0, \beta_0$ such that,
\begin{equation}\label{ass: strong conv}
 \mu_k(\bm{x})\geq \alpha_0>0,\quad
  3\lambda_k(\bm{x})+2\mu_k(\bm{x})\geq \beta_0>0,\qquad \forall
  \bm{x}\in \overline{D_k}, \quad k=1,\ldots, N.
\end{equation}
These conditions ensures the uniform strong convexity of $\Omega$.

\medskip

{\item[\namedlabel{hyp: assum_S_graph}{Assumption 5}- further assumptions on the fault $S$]
The surface $S$ is assumed to be a graph of a Lipschitz function with respect to a given coordinate frame.}
\end{description}

Our main result for the inverse problem is the following theorem.

\begin{theorem}\label{th: uniq.ip}
Under \ref{hyp: assum_domain} and \ref{hyp: assum_elastic_tens_ip}, let
$S_1, S_2$ be as in \ref{hyp: assum_disl} and such that ${S}_1, {S}_2$ {satisfy \ref{hyp: assum_S_graph} with respect to the same coordinate frame}. Let $\bm{g}_i\in
H^{\frac{1}{2}}_{00}(S_i)$, for $i=1,2$, with \textup{Supp}$\,
	\bm{g}_i=\overline{S}_i$, for $i=1,2$, and $\bm{u}_i$, for $i=1,2$, be the unique solution of \eqref{eq: Pu} in
	$H^{1}_{\Sigma}(\Omega\setminus \overline{S})$ corresponding to $\bm{g}=\bm{g}_i$ and $S=S_i$.
	If $\bm{u}_{1\big|_{\Xi}}=\bm{u}_{2\big|
		_{\Xi}}$,  then $S_1=S_2$ and $\bm{g}_1=\bm{g}_2$.
\end{theorem}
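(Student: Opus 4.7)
I would argue by contradiction, combining unique continuation for the Lam\'e system with the graph structure of the dislocation surfaces. Suppose $(S_1,\bm{g}_1)\neq (S_2,\bm{g}_2)$ and set $\bm{w}:=\bm{u}_1-\bm{u}_2$. Then $\bm{w}$ is a weak solution of $\mathrm{div}(\mathbb{C}\widehat{\nabla}\bm{w})=\bm{0}$ in $\Omega\setminus(\overline{S}_1\cup\overline{S}_2)$; moreover, since each $\bm{u}_i$ satisfies the traction-free condition on $\partial\Omega\setminus\Sigma\supset\Xi$, one has $\bm{w}|_\Xi=\bm{0}$ and $(\mathbb{C}\widehat{\nabla}\bm{w})\bm{\nu}|_\Xi=\bm{0}$, so $\bm{w}$ has zero Cauchy data on the open patch $\Xi\subset\partial D_1$.

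The first step is to invoke unique continuation for the isotropic Lam\'e system with Lipschitz coefficients. Since $\mathbb{C}$ is Lipschitz on each $D_k$ by \ref{hyp: assum_elastic_tens_ip} and $\Xi\subset\partial D_1$ by \ref{hyp: assum_domain}, Carleman-type estimates yield the vanishing of $\bm{w}$ in a neighborhood of $\Xi$ inside $D_1$; the vanishing is then propagated from each $D_k$ to adjacent subdomains across the common Lipschitz interface via three-spheres and doubling arguments, along the lines of \cite{Alessandrini_et_al14,Beretta_et_al17,Beretta_et_al,Beretta_et_al14}. The outcome is that $\bm{w}\equiv\bm{0}$ in the connected component $G$ of $\Omega\setminus(\overline{S}_1\cup\overline{S}_2)$ whose closure meets $\Xi$.

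Next I exploit the graph hypothesis to derive a contradiction from $S_1\neq S_2$. Writing each $S_i$ as $\{x_3=\psi_i(x_1,x_2)\}$ over an open planar set $\omega_i$ in the fixed frame, $S_1\neq S_2$ forces either $\omega_1\neq\omega_2$ or $\psi_1\not\equiv\psi_2$ on $\omega_1\cap\omega_2$; in either case, continuity of the $\psi_i$ and openness yield, after possibly swapping indices, a nonempty relatively open $U\subset S_1$ with $\overline{U}\cap\overline{S}_2=\emptyset$. Pick a small ball $B$ centered at a point of $U$ with $B\cap\overline{S}_2=\emptyset$. The two sides of $S_1$ in $B$ both lie in $\Omega\setminus(\overline{S}_1\cup\overline{S}_2)$, and because $S_1$ is an open graph with Lipschitz boundary $\partial S_1\subset\Omega$, one can connect them through $\Omega\setminus(\overline{S}_1\cup\overline{S}_2)$ by a path going around $\partial S_1$; using in turn that $S_2$ is a graph, this path can be chosen to avoid $\overline{S}_2$. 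Hence both sides of $U$ lie in $G$, and since $\bm{u}_2$ has no jump across $U$, taking the jump on $U$ yields
\[
\bm{g}_1|_U=[\bm{u}_1]_U=[\bm{u}_2]_U=\bm{0},
\]
contradicting $\mathrm{Supp}\,\bm{g}_1=\overline{S}_1$.

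Therefore $S_1=S_2=:S$. The set $\Omega\setminus\overline{S}$ is connected, and the same unique continuation argument, now applied directly in $\Omega\setminus\overline{S}$, gives $\bm{w}\equiv\bm{0}$ there; taking the jump across $S$ yields $\bm{g}_1-\bm{g}_2=[\bm{w}]_S=\bm{0}$. The principal technical difficulty is the unique continuation step: while the result is classical for the isotropic Lam\'e system with Lipschitz coefficients inside a single Lipschitz domain, propagating the vanishing across the transmission interfaces of the partition $\{D_k\}$ requires a careful interface analysis, which is exactly the reason for the piecewise-Lipschitz structure of \ref{hyp: assum_elastic_tens_ip}. A secondary but still delicate point is the path-connectivity argument needed to place both sides of $U$ in the same component $G$: here the graph hypothesis on both $S_1$ and $S_2$ is essential, as it is what enables the detour around $\partial S_1$ without having to cross $\overline{S}_2$.
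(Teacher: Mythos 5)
Your first step (zero Cauchy data on $\Xi$, unique continuation for the isotropic Lam\'e system with Lipschitz coefficients, propagation across the Lipschitz interfaces of the partition to get $\bm{w}\equiv\bm{0}$ in $G$) is essentially the paper's argument. The gap is in the second step, specifically in the claim that both sides of $U\subset S_1$ lie in the \emph{same} component $G$, reached by a path that circles $\partial S_1$ while avoiding $\overline{S}_2$. This is false in general, even under the graph hypothesis. Take $S_1=\{x_3=\psi_1(\bm{x}')\colon \bm{x}'\in\omega\}$ and $S_2=\{x_3=\psi_2(\bm{x}')\colon \bm{x}'\in\omega\}$ with $\psi_2<\psi_1$ in $\omega$ and $\psi_1=\psi_2$ on $\partial\omega$: both are graphs in the same frame, $\overline{U}\cap\overline{S}_2=\emptyset$ for any $U$ compactly contained in $S_1$, yet $\overline{S_1\cup S_2}$ is the boundary of a lens-shaped region $D$, so $\Omega\setminus\overline{S_1\cup S_2}$ has two components and the inner side of $U$ lies in $D$, not in $G$. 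No detour around $\partial S_1$ can avoid crossing $\partial D=\overline{S_1\cup S_2}$, so your topological argument cannot rule out this configuration, and unique continuation gives you no information about $\bm{w}$ inside $D$.

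This enclosed-region scenario is exactly the paper's Case (ii), and it is handled analytically rather than topologically: since $\bm{w}=\bm{0}$ in $G$ near $\partial D$ and the tractions of $\bm{u}_1,\bm{u}_2$ do not jump across $S_1,S_2$, the restriction $\bm{w}^-$ to $D$ solves the homogeneous system with zero traction on all of $\partial D$, hence is a rigid motion $\mathbf{A}\bm{x}+\bm{c}$ with $\mathbf{A}$ skew. The hypothesis $\bm{g}_i\in H^{\frac{1}{2}}_{00}(S_i)$ then forces $\bm{w}^-=[\bm{w}]_{S_i}=\bm{g}_i$ to vanish on the closed curve $\partial S_1\cap\partial S_2$; since the zero set of a nontrivial rigid motion is at most a line, $\mathbf{A}=\mathbf{0}$ and $\bm{c}=\bm{0}$, whence $\bm{g}_1=[\bm{w}]_{S_1}=\bm{0}$, contradicting $\mathrm{Supp}\,\bm{g}_1=\overline{S}_1$. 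Note that this is the only place where the vanishing of $\bm{g}_i$ at $\partial S_i$ is used in an essential way; your proposal uses $H^{\frac{1}{2}}_{00}$ only for well-posedness, which is a sign that a hypothesis is going unexploited. You need to add this case (or an equivalent treatment of the components of $\Omega\setminus\overline{S_1\cup S_2}$ other than $G$) to close the argument; the remainder of your proof, including the final identification $\bm{g}_1=\bm{g}_2$ once $\overline{S}_1=\overline{S}_2$, is sound and agrees with the paper's Case (i).
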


\begin{remark} \label{r:faultgeometry} {The assumptions that the surfaces $S_i$, $i=1,2$, are graphs and that they are graphs with respect to the same coordinate frame is not overly restrictive in the context of faults in geophysics, which is the context of the present work. In fact, in a given geographical region faults tend to be approximately horizontal with respect to the surface of the Earth, as predominantly in {\em dip-slip faults}, or approximately vertical, as predominantly in {\em strike-slip faults}, although oblique faults can also occur, depending of the characteristics of the rock formations present (see e.g. \cite{FaultDistribution1,FaultDistribution2}). Furthermore, these assumptions exclude {\em a priori} the existence of internal faces common to both $S_1$ and $S_2$, when the faults enclose a bounded region of space (see Case \ref{i.unique2} below in the proof of the theorem). In the presence of such common faces, it seems difficult to prove uniqueness and it is not at all clear, in fact, that uniqueness does hold in this case. However, it is possible to prove uniqueness under other geometric conditions on the faults that also exclude common internal faces, for example if the fault surfaces are each a union of, at most, two rectangular faces. In the geophysical literature, often the fault is taken to be a {\em single} rectangular face.}

\end{remark}

We denote by $G$ the connected component of $\Omega\setminus \overline{S_1 \cup
S_2}$ containing $\Xi$. By definition we have that $G\subseteq \Omega\setminus
\overline{S_1 \cup S_2}$. In addition, we define
\begin{equation}\label{eq: mathcal_G}
\mathcal{G}:=\partial G \setminus \partial\Omega.
\end{equation}
Before proving Theorem \ref{th: uniq.ip}, we recall the following lemma proved
in \cite{Aspri-Beretta-Mazzucato-de-Hoop} in the special case where $\Omega$ is
a half-space. However, this result is clearly true for bounded domains as well.

\begin{lemma}\label{lem: surf}
	Let $S_1, S_2$ as in \ref{hyp: assum_disl} and such that ${S}_1, {S}_2$ {satisfy \ref{hyp: assum_S_graph} with respect to the same coordinate frame}. Then $\mathcal{G}=\overline{S_1\cup S_2}$.
\end{lemma}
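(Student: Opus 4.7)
The plan is to prove the set equality $\mathcal{G} = \overline{S_1\cup S_2}$ by double inclusion. The hard direction is $\overline{S_1\cup S_2}\subseteq \mathcal{G}$, and this is the only one that uses the graph hypothesis.

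First, the inclusion $\mathcal{G} \subseteq \overline{S_1\cup S_2}$ is purely point-set topology. A point $p \in \mathcal{G} = \partial G \setminus \partial\Omega$ lies in the open set $\Omega$. Since $G$ is a connected component of the open set $\Omega\setminus\overline{S_1\cup S_2}$, $G$ is itself open, and $p \in \partial G$ forces $p \notin G$. If in addition $p \in \Omega\setminus\overline{S_1\cup S_2}$, then $p$ would lie in another connected component $G'$; but $G'$ is open and disjoint from $G$, contradicting $p\in\overline{G}$. Hence $p \in \overline{S_1\cup S_2}$, independently of the graph assumption.

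For the reverse inclusion $\overline{S_1\cup S_2}\subseteq \mathcal{G}$, fix the coordinate frame $(x_1,x_2,x_3)$ so that $S_i = \{x_3 = \psi_i(x_1,x_2) : (x_1,x_2)\in D_i\}$ over bounded Lipschitz planar domains $D_i$, and introduce the ``upper set''
\[
U := \{\bm{x} \in \Omega \setminus \overline{S_1\cup S_2} : x_3 > \psi_i(x_1,x_2)\ \text{for every $i$ with $(x_1,x_2)\in \overline{D_i}$}\}.
\]
I would then carry out three verifications: (i) $U$ is open and connected, hence a connected component of $\Omega\setminus\overline{S_1\cup S_2}$; (ii) $\Xi \subseteq \partial U$, which identifies $U$ with $G$; and (iii) for every $p \in \overline{S_1\cup S_2}$, the translates $p + \epsilon\bm{e}_3$ lie in $U$ for all sufficiently small $\epsilon>0$, so $p \in \partial U\setminus\partial\Omega = \mathcal{G}$. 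Step (iii) uses only that each $\psi_i$ is a function, so translation in the $+x_3$ direction moves graph points strictly above both graphs. Step (i) is the geometric heart of the argument: the graph property guarantees that each vertical slice $U\cap\{(x_1,x_2)=\text{const}\}$ is either empty or a single non-degenerate interval with upper endpoint on $\partial\Omega$, and the projection of $U$ onto the $(x_1,x_2)$-plane is a connected open subset of the projection of $\Omega$, so any two points in $U$ can be joined by a vertical lift followed by a horizontal path followed by a vertical descent, all staying in $U$.

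The main obstacle is the geometric verification of (i) and (ii) for a general bounded Lipschitz $\Omega$: one must ensure that vertical upward rays from points of $U$ remain in $\Omega$ until reaching $\partial\Omega$ without re-crossing $\overline{S_1\cup S_2}$, and that $\Xi$ is accessible from $U$. The first issue is handled by combining the strict inclusion $\overline{S_i}\subset \Omega$ from \ref{hyp: assum_disl} with the Lipschitz regularity of $\partial\Omega$, which provides a positive vertical clearance above $\overline{S_1\cup S_2}$ up to $\partial\Omega$. The second issue reflects the implicit geophysical convention that $\Xi$ lies on the ``upper'' face of $\partial\Omega$ relative to the chosen coordinate frame and so is reachable by upward rays from $U$. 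Once these are established, $G=U$ and the three steps close the lemma. This reproduces the argument of the half-space case in \cite{Aspri-Beretta-Mazzucato-de-Hoop}, with rays to infinity replaced by paths along $\partial\Omega$ to $\Xi$.
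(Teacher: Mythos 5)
First, note that the paper itself does not prove this lemma: it cites \cite{Aspri-Beretta-Mazzucato-de-Hoop} for the half-space case and simply asserts the bounded-domain version, so there is no in-paper proof to compare against. Judged on its own terms, your easy inclusion $\mathcal{G}\subseteq\overline{S_1\cup S_2}$ is correct, but the reverse inclusion --- the only part with content --- has a genuine gap. The central problem is that the two graphs need not sit ``side by side'': at a point $(x_1,x_2)\in D_1\cap D_2$ one may have $\psi_2(x_1,x_2)>\psi_1(x_1,x_2)$ strictly. For $p=(x_1,x_2,\psi_1(x_1,x_2))\in S_1$ the translate $p+\epsilon\bm{e}_3$ then satisfies $x_3<\psi_2$ for all small $\epsilon>0$, so it does \emph{not} lie in your set $U$: step (iii) fails precisely at the points of the lower surface lying underneath the upper one, which are the points where the lemma is nontrivial. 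The same configuration breaks step (i): take $S_1,S_2$ to be two parallel horizontal disks over the same projection inside a ball $\Omega$. Then $\Omega\setminus\overline{S_1\cup S_2}$ is connected (one passes around the rims $\partial S_i$), so it is its own unique component $G$, while $U$ omits the slab between the disks and the region below both; hence $U$ is a connected \emph{proper} subset of $G$ and not a component. This is not an exotic configuration: Case \ref{i.unique2} in the proof of Theorem \ref{th: uniq.ip}, where $\overline{S_1\cup S_2}$ bounds a domain $D$, is exactly of this ``stacked'' type, and there too $U\subsetneq G$ (points below the lower cap are in $G$ but not in $U$). Finally, step (ii) appeals to a convention that $\Xi$ lies on the ``upper'' face of $\partial\Omega$ relative to the chosen frame; but that frame is the one in which the $S_i$ are graphs, which is ``fixed but arbitrary'' and entirely decoupled from the location of $\Xi$ (the paper emphasizes that faults may be horizontal, vertical, or oblique relative to the Earth's surface), so this assumption is not available.

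What actually has to be shown is that for every $p\in\overline{S_1\cup S_2}$ at least one of the two local sides of the surface at $p$ meets $G$. The graph hypothesis should enter through the fact that a vertical line meets each $\overline{S_i}$ at most once, so that from either side of $p$ one can escape along vertical segments and around the lateral boundaries $\partial S_i$ into the region above the upper envelope $\max(\psi_1,\psi_2)$, and a separate argument is then needed to connect that region to the component containing $\Xi$. Your outline collapses all of this into the identification $G=U$, which is false, so the argument as written does not close.
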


\begin{proof}[Proof of Theorem \ref{th: uniq.ip}]
We proceed by contradiction and assume that $S_1\ne S_2$.
We first show that $\bm{w}:=\bm{u}_1-\bm{u}_2$ is identically zero in $G$. We
can assume, without loss of generality, that $\Xi$ is the graph of a Lipschitz
function in some coordinate frame, say with respect to the $z$-axis. In fact,
it is enough to take a possibly small open subset of $\Xi$ instead of the
entire $\Xi$, and then this hypothesis is always satisfied as $\partial\Omega$
is assumed globally Lipschitz.
On $\Xi$ we have that
	\begin{equation*}
		\bm{w}=\bm{0},\qquad
(\mathbb{C}\widehat{\nabla}\bm{w})\bm{\nu}=\bm{0},
	\end{equation*}
Then, fixing a point $\bm{x}_0\in\Xi$, we consider $B_R(\bm{x}_0)$, the ball of
radius $R$ and center $\bm{x}_0$, where $R$ is taken sufficiently small so
that $B_R(\bm{x}_0)\cap \Xi \subseteq \Xi$  and we denote by
$B^-_R(\bm{x}_0):=B_R(\bm{x}_0)\cap \overline{\Omega}$ and
$B^+_R(\bm{x}_0)=(B^-_R(\bm{x}_0))^C$, the complementary domain. We define
	\begin{equation}
		\widetilde{\bm{w}}:=
		\begin{cases}
		\bm{w} & \textrm{in}\,\, B^-_R(\bm{x}_0)\\
		\bm{0} & \textrm{in}\,\, B^+_R(\bm{x}_0).
		\end{cases}
	\end{equation}
We note that $\widetilde{\bm{w}}\in H^1(B_R(\bm{x}_0))$.\\
We observe next that, since $\Xi$ is the graph of a Lipschitz function, the
restriction of $\mathbb{C}$ on $\Xi$ is  Lipschitz as well. Then we can
extend $\mathbb{C}$ to a Lipschitz elasticity tensor $\widetilde{\mathbb{C}}$
in $B^-_R(\bm{x}_0)\cup B^+_R(\bm{x}_0)$ as follows: for each $\bm{\xi}$ on the
graph of $\Xi$, we extend $\mathbb{C}$ in $B^+_R(\bm{x}_0)$, keeping the
constant value $\mathbb{C}(\bm{\xi})$ along the vertical direction of the
coordinate frame. Note that this argument can be applied for each component of
the tensor. Consequently, arguing as in
\cite{Alessandrini-Rondi-Rosset-Vessella}, we obtain that $\widetilde{\bm{w}}$
is a weak solution of
	\begin{equation*}
		\textrm{div}(\widetilde{\mathbb{C}}\widehat{\nabla}\widetilde{\bm{w}})=\bm{0},\qquad \textrm{in}\,\,B_R(\bm{x}_0).
	\end{equation*}
We apply now the weak continuation property, see \cite{LinNakamuraWang}. In
fact, since $\widetilde{\bm{w}}=\bm{0}$ in $B^+_R(\bm{x}_0)$ and since the weak
continuation property holds in $B_R(\bm{x}_0)$, it follows  that
	\begin{equation*}
		\widetilde{\bm{w}}=\bm{0},\qquad \textrm{in}\,\, B_R(\bm{x}_0).
	\end{equation*}
In particular, $\bm{w}=\bm{0}$ in $B^-_R(\bm{x}_0)$. Furthermore,
applying again the weak continuation property, we find that $\bm{w}=\bm{0}$ in
$D_1$.

Next, thanks to the hypotheses on $S_i$, $i=1,2$, there exists a path-connected
open subdomain of $\Omega$ that connects $\Xi$ with every elements of the
partition which belong to $G$. Along this path, we can always
assume that the boundary of the partition is  Lipschitz. Consequently,
we can recursively apply the previous argument and  we get that $\bm{w}\equiv
\bm{0}$ in $G$. We then distinguish two cases:
	\begin{enumerate}[label=(\roman*), ref=(\roman*)]
	\item {$G=\Omega\setminus\overline{S_1\cup
S_2}$;}\label{i.unique1}
	\item {$G\subset \Omega\setminus\overline{S_1\cup S_2}$.}
\label{i.unique2}
\end{enumerate}
For  Case \ref{i.unique1}, see Figure \ref{fig:1},
\begin{figure}[h!]
	\centering
	\includegraphics[scale=0.6]{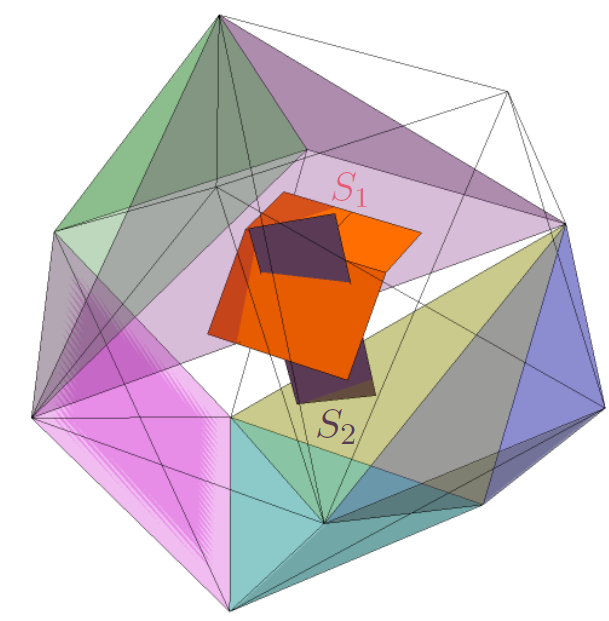}
	\caption{An example of the geometrical setting in Case \ref{i.unique1}. Graphics generated using Wolfram Mathematica$^{\text{\copyright}}$.}\label{fig:1}
\end{figure}
by the hypothesis that the surfaces are Lipschitz and the fact that $S_1\neq S_2$, without loss of generality, there exist a point $\bm{y}\in
{S}_1$ such that $\bm{y}\notin\overline{S}_2$, and a ball $B_r(\bm{y})$  that does not
intersect ${S}_2$, where $r$ is sufficiently small. Hence,
\begin{equation*}
\bm{0}=[\bm{w}]_{B_r(\bm{y})\cap S_1}=[\bm{u}_1]_{B_r(\bm{y})\cap S_1}=\bm{g}_1,
\end{equation*}
and this identity leads to a contradiction, as
\textrm{supp}($\bm{g}_1$)=$\overline{S}_1$. It follows that $\overline{S}_1=\overline{S}_2$ and, consequently,
\begin{equation*}
\bm{0}=[\bm{w}]_{S_1}=[\bm{w}]_{S_2}\Rightarrow [\bm{u}_1]_{S_1}=[\bm{u}_2]_{S_2}\Rightarrow
\bm{g}_1=\bm{g}_2.
\end{equation*}
Next, we analyze Case \ref{i.unique2}, see Figure \ref{fig:2}.
\begin{figure}[h!]
	\centering
	\includegraphics[scale=0.7]{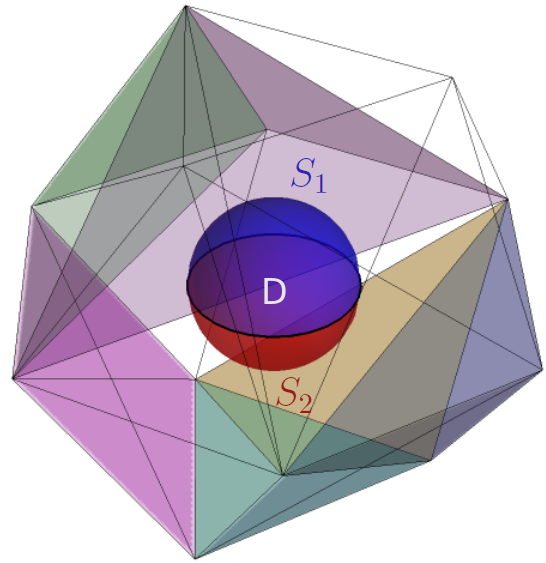}
	\caption{An example of the geometrical setting in Case \ref{i.unique2}. The bounded connected domain $D$ is such that $\partial D=\overline{S_1 \cup S_2}$. Graphics generated using Wolfram Mathematica$^{\text{\copyright}}$.}\label{fig:2}
\end{figure}
We recall that, by hypothesis, the two surfaces are Lipschitz graphs with respect to an arbitrary, but fixed, common frame. Then  by Lemma \ref{lem: surf} we can assume, without loss of generality, that
the complement of $\overline{S_1\cup S_2}$ has only one bounded connected component, since, if there are more than one, we can treat each one separately. That is, we can assume that there exists a  bounded connected domain $D$ such that $\partial D=\overline{S_1\cup S_2}$. In this situation, in particular, $\overline{S_1}$ and $\overline{S_2}$ intersects precisely only along their common boundary $\mathcal{C}=\partial S_1\cap \partial
S_2$, which is non empty. If there are other parts of their boundary that are not in common, they can be treated as in Case \ref{i.unique1}. Then
$\bm{w}=\bm{0}$ in a neighborhood of $\partial D$ in $\Omega\setminus
\overline{D}$, since $\bm{w}=\bm{0}$ in $G$.
The continuity of the tractions $(\mathbb{C}\widehat{\nabla}\bm{u}_1)\bm{n}$ and
$(\mathbb{C}\widehat{\nabla}\bm{u}_2)\bm{n}$ in trace sense across $S_1$ and
$S_2$, respectively, implies that
\begin{equation}\label{eq: conor_w-}
(\mathbb{C}\widehat{\nabla}\bm{w}^-)\bm{n}=\bm{0},
\end{equation}
in $H^{-\frac{1}{2}}(\partial D)$ and hence a.e. on $\partial D$,
where $\bm{w}^-$ indicates the function $\bm{w}$ restricted to $D$ and $\bm{n}$ the outward unit normal to $D$. Moreover, $\bm{w}^-$ satisfies
\begin{equation}\label{eq: equat_w-}
\textrm{div}(\mathbb{C}\widehat{\nabla}\bm{w}^-)=\bm{0}\,\quad \textrm{in}\,\, D.
\end{equation}
We conclude  from \eqref{eq: conor_w-} and \eqref{eq: equat_w-} that
$\bm{w}^-$ is in the kernel of the operator for elastostatics in $H^1(D)$,
i.e., it is a rigid motion:
\begin{equation*}
\bm{w}^-=\mathbf{A}\bm{x}+\bm{c},
\end{equation*}
where $\bm{c}\in\mathbb{R}^3$ and $\mathbf{A}\in\mathbb{R}^{3\times3}$ is a skew
matrix. We conclude the proof by showing that this rigid motion can only
be the trivial one.  By construction $\bm{w}^{-}=[\bm{w}]_{S_i}=\bm{g}_i$ on $S_i$, so in
particular it must vanish along $\partial S_i$, i.e., on $\mathcal{C}$ due to
the hypothesis $\bm{g}_i\in H^{\frac{1}{2}}_{00}(S_i)$. On the other hand the set of
solutions of the linear system $\mathbf{A}\,\bm{x}=\bm{c}$, for any given
$\bm{c}\in \RR^3$ is a one-dimensional linear subspace of $\RR^3$, since
$\mathbf{A}$ is anti-symmetric, and therefore it cannot contain a closed curve.
It follows that necessarily $\mathbf{A}=\mathbf{0}$ and $\bm{c}=\bm{0}$.
Consequently, $\bm{w}^-=\bm{0}$ in $D$, hence $[\bm{w}]=\bm{0}$ on
$\partial D$. In particular, $[\bm{w}]_{S_1}=\bm{0}=[\bm{u}_1]=\bm{g_1}\ne
\bm{0}$, by the assumption that $\text{Supp}(\bm{g}_i)=\overline{S_i}$.
We reach a contradiction and, therefore, Case \ref{i.unique2} does not occur.
\end{proof}

\bibliography{mybib}{}
\bibliographystyle{plain}

\end{document}